\newtheorem{theorem}{Theorem}[section]
\newtheorem{lemma}[theorem]{Lemma}
\newtheorem{proposition}[theorem]{Proposition}
\theoremstyle{definition}
\newtheorem{definition}[theorem]{Definition}
 \newtheorem{heuristic}{Heuristic}
\theoremstyle{remark}
\numberwithin{equation}{section}
\newcommand{\Z}{ \mathbb{Z}}  
\newcommand{\Ot}{ \widetilde{O}}  
\newcommand{\Lunits}{ (\Z/\Lambda \Z)^{\times} } 
\newcommand{\La}{ \Lambda}
\newcommand{\E} { {\rm E}}
\begin{document}

\title[constructing Carmichael numbers]
{constructing Carmichael numbers through improved subset-product algorithms}

\author[Alford]{W.R. Alford } 

\author[Grantham]{Jon Grantham}
\address{Institute for Defense Analyses \\ Center for Computing Sciences \\
		17100 Science Drive \\ Bowie, MD 20715, United States}

\email{grantham@super.org}

\author[Hayman]{Steven Hayman}
\address{Illinois Wesleyan University \\	
		1312 Park St \\ Bloomington, IL 61701, United States}
\email{shayman@iwu.edu}

\author[Shallue]{Andrew Shallue}
\address{Illinois Wesleyan University \\
		1312 Park St \\ Bloomington, IL 61701, United States}
\email{ashallue@iwu.edu}
\thanks{Research supported by an Illinois Wesleyan University grant}
\thanks{W.R. Alford passed away in 2003}



\subjclass[2010]{Primary 11Y16}
\keywords{Subset sum, Carmichael numbers}

\date{}


\begin{abstract}
We have constructed a Carmichael number with 
10,333,229,505 prime factors, 
and have also constructed Carmichael numbers with $k$ prime factors for every 
$k$ between 3 and 19,565,220.  These computations are the product 
of implementations of two new algorithms for the subset product problem 
that exploit the non-uniform distribution of primes $p$ with the property 
that $p-1$ divides a highly composite $\Lambda$.
\end{abstract}
\maketitle


\section{Introduction}

A Carmichael number $n$ is a composite integer that is a base-$a$ Fermat pseudoprime 
for all $a$ with ${\rm gcd}(a,n) = 1$.  However, constructions of Carmichael numbers often 
rely on the following equivalent definition.

\begin{definition}[Korselt condition] \label{def:korselt}
A positive integer $n$ is a Carmichael number if it is composite, squarefree, and has 
the property that $p-1 \mid n-1$ for all primes $p$ dividing $n$.
\end{definition}

Our goal is to construct Carmichael numbers with a very large number of prime factors.
The construction we will use is due to Erd\H{o}s \cite{Erdos56} and 
 has been a popular method since 1992 \cite{Zh92}.
 
\bigskip

{\bf Erd\H{o}s Construction:}
\begin{enumerate}
\item Choose  $\La = \prod_{i=1}^r q_i^{h_i}$ where 
$q_1 \dots q_r$ are the first $r$ primes in order and the $h_i$ are 
all at least $1$ and non-increasing.
\smallskip

\item Construct the set $\mathcal{P} = \{ \mbox{$p$ prime} \ : \ p-1 \mid \La, p \nmid \La \}$
\smallskip

\item 
Construct Carmichael $n$ as a product of primes in $\mathcal{P}$ in one of two ways:
\begin{enumerate}
\item \label{construction1}
Find a subset $\mathcal{S}$ of $\mathcal{P}$ such that 
$$
\prod_{p \in \mathcal{S}} p \equiv 1 \mod \La \enspace .
$$
Then by Definition \ref{def:korselt} $n = \prod_{p \in \mathcal{S}} p$ is Carmichael.
\smallskip

\item \label{construction2}
Alternatively, let $b \equiv \prod_{p \in \mathcal{P}} p \mod{\La}$ and 
 find a subset $\mathcal{T}$ of $\mathcal{P}$ such that 
$$
\prod_{p \in \mathcal{T}} p \equiv b \mod \La \enspace .
$$
Then $n = \prod_{p \in \mathcal{P} \setminus \mathcal{T}} p$ is Carmichael.
\end{enumerate}
\end{enumerate}

This notation for $\La$ will be fixed throughout, along with $b$ as the product modulo
$\La$ of all primes in $\mathcal{P}$.  Additionally, we will use $Q_i$
to represent $q_i^{h_i}$ and fix the ordering so that $q_1 = 2$, $q_2 = 3$ and so on.  

Choosing a good $\La$ is something of an art, seeing as how we want a number 
that is small and yet has many divisors.
One possibility (taken as a starting point by the authors in \cite{LohNie96}) is to 
choose $\La$ to be highly composite \cite{Ram97}.  We do not insist upon it here, instead 
relying on the condition that $h_i \leq r/i$ for $1 \leq i \leq r$ in order to prove bounds 
on running times. 
In practice, an excellent tool for choosing $\La$ is the following function $K(\La)$ 
from \cite{LohNie96} that returns an estimate of $|\mathcal{P}|$.
$$
K(\La) = \left \lfloor \frac{\Lambda}{\phi(\La) \log{\sqrt{2\La}}} 
	\prod_{i=1}^r \left( h_i + \frac{q_i - 2}{q_i - 1} \right) \right \rfloor 
$$
In terms of constructing $\mathcal{P}$, all divisors $d$ of $\La$ are checked to see if 
$n=d+1$ is prime.  Primality proofs are easy since we are given the factorization of $n-1$.
They are also necessary; the second author has found pseudoprimes while 
testing primality using randomized algorithms.

L\"{o}h and Niebuhr noted that step (2) is by far the most costly.  Nevertheless, 
the improvements we present will be to step (3), seeing as how the subset product 
problem in such a dense set of instances is fertile ground for algorithmic advancement.
In addition, step (2) is easily parallelized while step (3) is not, and step (2) requires 
almost no memory while the space requirement of most subset product algorithms is high.

Our new contribution involves improvements to existing literature on a broad array of fronts.
One new algorithm combines a series of smaller Carmichael numbers into larger ones, 
enabling quick construction of Carmichaels with a variety of factor counts.  Another new algorithm
incorporates ideas from  \cite{LohNie96} and \cite{FlaxPrz05} to achieve a randomized
method that solves the subset product problem using subexponential time and space
 (in fact, sublinear in $N$).  Computationally this has resulted in Carmichael numbers 
 with $10333229505$ prime factors and with $k$ prime factors for every $k$ between 
 $3$ and $19565220$.
 
 One key insight is that elements of $\mathcal{P}$ are not distributed uniformly 
 in $\Lunits$, and this non-uniformity can be exploited.  Another is that it is useful 
 to make products that get successfully closer to the identity in $\Lunits$.
 We will use two such functions for our algorithms, the first of which comes from 
 \cite{LohNie96}.

\begin{definition} \label{def:omega}
Let $a \in \Lunits$.  Then $\omega(a)$ is an integer between $0$ and $r$ defined by
$$
\omega(a) = \max_{a \bmod{Q_i} \neq 1} i
$$
unless $a \bmod{Q_i} = 1$ for all $1 \leq i \leq r$, in which case $\omega(a) = 0$.
\end{definition}

\begin{definition} \label{def:omegabar}
Let $a \in \Lunits$.  Then $\bar{\omega}(a)$ is an integer between $0$ and $r$ defined by
$$
\bar{\omega}(a) = \min_{a \bmod{Q_i} \neq 1} i
$$
unless $a \bmod{Q_i} = 1$ for all $1 \leq i \leq r$, in which case $\bar{\omega}(a) = r+1$.
\end{definition}

Formally, the subset product problem over an abelian group is 
defined as follows. 

\begin{definition}
Let $G$ be an abelian group written multiplicatively with  $(a_1, \dots, a_N, b)$ a list 
of elements of $G$.  Then the subset product problem $(a_1, \dots, a_N,b)$ is 
to determine a sublist of the $a_i$ that product to $b$ in $G$.
\end{definition}

In the Erd\H{o}s construction $G$ is typically $\Lunits$, but 
we will also consider subset product problems on subgroups of $\Lunits$.

The subset product problem is NP-hard, but the difficulty of a particular instance can vary 
depending on its density.  The hardest problems are those of density $1$.

\begin{definition}
The density of a subset product problem $(a_1, \dots, a_N,b)$ is 
$$
\frac{N}{\log_2{|G|}} \enspace .
$$
\end{definition}

Problems arising from the Erd\H{o}s construction will typically have density much 
larger than $1$, in fact closer to $O(N/\log{N})$.  We thus expect algorithms to exist 
with running times much faster than $O(2^N)$.
Since so many solutions are available, we are free to focus on a solution
with special properties that makes it 
easier to find.

Many algorithms for subset sum and subset product are randomized, and hence rely on 
an assumption about the distribution of the $a_i$.  A little experimentation reveals that 
elements of $\mathcal{P}$ are not uniformly distributed in $\Lunits$, but instead 
are close to a symmetric distribution (see Section \ref{sec:dist}).

\begin{definition}
A random variable $X$ on a group $G$ follows a symmetric distribution if for every 
$a \in G$, ${\rm Pr}[ X = a] = {\rm Pr}[ X = a^{-1}]$.
\end{definition}

In what follows we give a new algorithm for the random subset product problem that works
for instances of high density on groups of the form given in the above construction.
Specifically we prove the following two theorems.

\begin{theorem} \label{theorem:main1}
Let $G= G_0$ be an abelian group with subgroups $G_1, G_2, \dots, G_{\ell}$ where 
$|G_i|/|G_{i+1}| = 2^{\ell}$ for $0 \leq i \leq \ell-1$ and $\ell = \sqrt{\log{|G|}}$.
Assume that $a_1, \dots, a_N$ are independent and distributed symmetrically in $G$, 
and that $N > O( 4^{\sqrt{\log{|G|}}} \log{|G|})$.

Then there is an algorithm that solves the subset product problem 
$(a_1, \dots, a_N, b)$ with high probability and requires time and space
$$
\Ot \left( 4^{\sqrt{\log{|G|}}} \right) \enspace .
$$
\end{theorem}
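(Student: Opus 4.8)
The plan is to read the subgroup tower $G=G_0\supset G_1\supset\cdots\supset G_\ell=\{1\}$ (note that $|G_0|/|G_\ell|=(2^\ell)^\ell=2^{\ell^2}=2^{\log|G|}=|G|$, so the chain does reach the trivial group) as the scaffolding for a generalized-birthday, or $k$-tree, computation with $k=2^\ell$ leaf lists. Since each successive quotient $G_i/G_{i+1}$ has size exactly $2^\ell$, a merge step that forces a partial product from $G_i$ into $G_{i+1}$ clears precisely $\ell$ of the $\ell^2=\log|G|$ digits, and after $\ell$ such merges a product lands in $G_\ell=\{1\}$. To target $b$ I would fold $b^{-1}$ into a single designated leaf and then seek the identity at every level, so that the surviving subset product, with the artificial factor removed, equals $b$.

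First I would split the $N$ elements into $2^\ell$ disjoint blocks and, inside each block, form a leaf list of $2^\ell$ partial products, each being a product of $\Theta(\log|G|)$ distinct $a_i$. Using disjoint index sets both within and across blocks makes every leaf entry independent and guarantees the final subset product is well defined; the bookkeeping gives exactly $2^\ell\cdot 2^\ell\cdot\Theta(\log|G|)=\Theta(4^{\sqrt{\log|G|}}\log|G|)$ elements consumed, which is the source of the hypothesis on $N$. I would then run the binary merge tree: at level $i$ I pair adjacent lists of elements of $G_{i-1}$, and by sorting or hashing on the image in $G_{i-1}/G_i$ I keep only those products $xy$ whose image is trivial (or is the $b$-coset at the designated spot). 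Choosing the common list size to be $S=2^\ell$ is the fixed point of the recursion, since two lists of size $S$ produce about $S^2/2^\ell$ survivors in $G_i$, and setting $S^2/2^\ell=S$ forces $S=2^\ell$. Each of the $O(2^\ell)$ merges costs $\Ot(2^\ell)$, and storing the $2^\ell$ leaf lists of size $2^\ell$ costs $\Ot(4^\ell)$, giving the claimed time and space $\Ot(4^{\sqrt{\log|G|}})$.

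The heart of the proof --- and the step I expect to be the main obstacle --- is the probabilistic guarantee that the merged lists neither die out nor explode, so that with high probability a combination survives to the root. This has two ingredients. The analytic one is equidistribution: I need that a product of $\Theta(\log|G|)$ independent symmetric samples is within total-variation distance $o(2^{-\ell})$ of uniform on each quotient, so that the birthday count $S^2/2^\ell$ is accurate. Here the symmetry hypothesis is exactly what makes the character sums $\hat p(\chi)=\sum_a p(a)\chi(a)$ real-valued (because $\chi(a^{-1})=\overline{\chi(a)}$ and $p(a)=p(a^{-1})$), so each nontrivial character is damped geometrically under repeated convolution; provided the distribution has a spectral gap $1-|\hat p(\chi)|=\Omega(1/\sqrt{\log|G|})$ for $\chi\neq 1$, raising to the power $\Theta(\log|G|)$ drives the bias below $2^{-\ell}$. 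Justifying this gap is where the empirical observations of Section~\ref{sec:dist} and the non-degeneracy of $\mathcal{P}$ in $\Lunits$ must enter. The combinatorial ingredient is a second-moment or Chernoff concentration argument showing each surviving list has size $\Theta(2^\ell)$; the disjointness of the leaf index sets keeps leaf entries independent, but I will still have to control the dependencies introduced at the internal nodes, where two survivors may descend from overlapping collections of leaf products. Managing these correlations --- the classical subtle point in $k$-tree analyses --- together with establishing the spectral gap under the symmetric-distribution model, is the crux on which the whole argument rests.
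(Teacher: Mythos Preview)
Your level-by-level reduction through the subgroup chain and the $\Ot(4^{\sqrt{\log|G|}})$ cost accounting are fine, and close in spirit to what the paper does (the paper works with a single list of size $\Theta(\ell^2 4^\ell)$ and pairs within it, rather than a binary $k$-tree of lists, but that is a cosmetic difference).  The substantive divergence is in the probabilistic core.

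You identify the crux as proving near-uniformity on each quotient via a spectral-gap/character argument, and you even note that establishing the gap ``is where the empirical observations of Section~\ref{sec:dist} \dots\ must enter.''  But the theorem as stated assumes only a symmetric distribution on an arbitrary abelian $G$, and symmetry alone gives no spectral gap: a symmetric distribution can be supported on a proper subgroup, in which case $\hat p(\chi)=1$ for some nontrivial $\chi$ and your convolution-to-uniform step fails outright.  So as written your argument does not prove the stated theorem; it would at best prove a weaker version with an added non-degeneracy hypothesis.

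The paper avoids this entirely with a much cheaper observation (its Proposition~\ref{prop:subgroup_collision}): if $X_1,X_2$ are i.i.d.\ symmetric on $G$ and $H\le G$, then
\[
\Pr[X_1X_2\in H]=\sum_{\hat a\in G/H}\Pr[X_1\in \hat aH]\Pr[X_2\in \hat a^{-1}H]=\sum_{\hat a}\Pr[X_1\in \hat aH]^2\ge \frac{|H|}{|G|},
\]
the equality using symmetry and the inequality being the trivial lower bound on collision probability.  Since products of symmetric variables stay symmetric (Proposition~\ref{prop:sym_product}), this bound propagates through every level of the algorithm with no convergence-to-uniform needed, and Chernoff on the (genuinely independent, because disjoint-history) match indicators finishes the survival analysis.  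Once you see this, your pre-multiplication of $\Theta(\log|G|)$ leaf elements and the whole equidistribution program become unnecessary, and the worry about ``dependencies introduced at the internal nodes'' also dissolves: in the single-list version each surviving product carries a disjoint set of original $a_i$, so the match indicators really are independent.
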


\begin{theorem} \label{theorem:main2}
Let $G = \Lunits$ and $\mathcal{P}$ be defined as in the Erd\H{o}s construction, with the 
added assumption that $1 \leq h_i \leq r/i$ for all $1 \leq i \leq r$.
Assume that the elements of $\mathcal{P}$ are independent and 
distributed symmetrically in $G$, and 
that $N = | \mathcal{P} | = K(\La)$.  Finally, assume that the probability $p \equiv 1 
\bmod{Q_i}$ for $p \in \mathcal{P}$ is at least $1/(h_i+1)$ and independent across $Q_i$.

Then there is an algorithm that with high probability 
finds a subset of $\mathcal{P}$ that products to $b$ in $G$ and requires time and space
$$
2^{ O( \sqrt{  (\log{N}) (\log\log{N})^2 } ) } \enspace .
$$
\end{theorem}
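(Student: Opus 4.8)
The plan is to derive Theorem~\ref{theorem:main2} as an application of Theorem~\ref{theorem:main1}, choosing the subgroup filtration of $G = \Lunits$ to match the multiplicative structure given by the factorization $\La = \prod_{i=1}^r Q_i$. By the Chinese Remainder Theorem, $G \cong \prod_{i=1}^r (\Z/Q_i\Z)^{\times}$, and the natural candidates for the nested subgroups are the subsets of factors on which an element is already congruent to $1$. Concretely, I would define $G_j$ to consist of those $a \in G$ with $a \bmod Q_i = 1$ for all $i$ in some prefix or suffix of $\{1,\dots,r\}$; the functions $\omega$ and $\bar{\omega}$ from Definitions~\ref{def:omega} and~\ref{def:omegabar} are exactly the tools that track membership in such a filtration. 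The goal is to build a chain $G = G_0 \supset G_1 \supset \cdots \supset G_{\ell}$ whose successive index ratios $|G_j|/|G_{j+1}|$ are all close to $2^{\ell}$ with $\ell = \sqrt{\log |G|}$, so that the hypotheses of Theorem~\ref{theorem:main1} apply directly.

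First I would estimate $\log |G| = \log \phi(\La)$ in terms of $r$ and the $h_i$, using the constraint $1 \le h_i \le r/i$; this bounds the total bit-size and lets me translate the abstract running time $\Ot(4^{\sqrt{\log|G|}})$ of Theorem~\ref{theorem:main1} into the stated $2^{O(\sqrt{(\log N)(\log\log N)^2})}$. The second task is to verify the density hypothesis $N > O(4^{\sqrt{\log|G|}}\log|G|)$: here I would invoke $N = K(\La)$ together with the formula for $K(\La)$ and the symmetry assumption, checking that the estimate for $|\mathcal{P}|$ grows fast enough relative to $\phi(\La)$ so that the construction has enough primes to draw from. Third, and most delicate, I would use the assumption that $\Pr[p \equiv 1 \bmod Q_i] \ge 1/(h_i+1)$, independent across $i$, to argue that the elements of $\mathcal{P}$, when projected onto each subgroup quotient $G_j/G_{j+1}$, still behave symmetrically and cover the quotient densely enough to satisfy the input requirements of Theorem~\ref{theorem:main1} at every level of the filtration.

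The main obstacle will be reconciling the uniform index ratio $|G_i|/|G_{i+1}| = 2^{\ell}$ demanded by Theorem~\ref{theorem:main1} with the fact that the CRT factors $(\Z/Q_i\Z)^{\times}$ have wildly varying sizes $\phi(Q_i)$ dictated by the irregular $h_i$. In general no prefix of the $Q_i$ will produce a subgroup of exactly the prescribed index, so I expect to group several consecutive factors together to form each $G_j$, solving a bin-packing-style problem to make the logarithmic sizes $\sum \log \phi(Q_i)$ per block all approximately equal to $\ell$. The inequality $h_i \le r/i$ is precisely what controls how much any single $Q_i$ can contribute, preventing one factor from overshooting a block; I would use this to show the blocks can be balanced to within a multiplicative constant, which suffices since Theorem~\ref{theorem:main1} is only stated up to the $\Ot$ and $O(\cdot)$ in the exponent.

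Finally, I would assemble these pieces: having exhibited a valid filtration and verified the symmetry, independence, and density hypotheses survive the block grouping, I would apply Theorem~\ref{theorem:main1} to obtain a subset of $\mathcal{P}$ whose product is $b$ in time and space $\Ot(4^{\sqrt{\log|G|}})$, then substitute the bound on $\log|G|$ in terms of $\log N$ and $\log\log N$ to recover the stated running time. The only genuinely new analytic input beyond Theorem~\ref{theorem:main1} is the bookkeeping that relates $\log\phi(\La)$, $K(\La)$, and $r$ under the $h_i \le r/i$ constraint, so I would expect that portion to require the most careful estimation.
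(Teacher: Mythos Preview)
Your approach is viable and yields the stated asymptotic bound, but it is genuinely different from the paper's proof. You propose to apply Theorem~\ref{theorem:main1} directly to the full group $G=\Lunits$, building the filtration out of CRT blocks; the paper instead runs a two-phase algorithm. In Phase~1 it uses the hypothesis $\Pr[p\equiv 1\bmod Q_i]\ge 1/(h_i+1)$ to show that the set $\mathcal{P}_m=\{p\in\mathcal{P}:\omega(p)\le m\}$ is large enough, where $m$ is chosen minimal so that $\E[N_m]>(\log\La)4^{\sqrt{\log|G|}}$ with $G=(\Z/\prod_{i\le m}Q_i\,\Z)^\times$; it then multiplies $b^{-1}$ by a handful of primes with prescribed $\omega$-values to push the distinguished element into this smaller $G$. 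Only in Phase~2 does it invoke Theorem~\ref{theorem:main1}, applied to $G$ rather than to $\Lunits$. The paper proves $m=O(\sqrt{r}\log r)$, but then explicitly concedes that $\log|G|$ and $\log\La$ are both $O(\log N(\log\log N)^2)$, so the restriction to the subgroup buys nothing asymptotically---its purpose is to justify the dramatic practical speedups seen in the implementation. Your direct route is therefore cleaner for the theorem as stated, and in fact your ``most delicate'' third step (invoking the $1/(h_i+1)$ bound to control per-level density) is unnecessary: Theorem~\ref{theorem:main1} requires only global symmetry and $N$ large enough, both of which you already have, so that hypothesis is idle in your argument.

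One technical point does need repair. Your claim that $h_i\le r/i$ ``prevents one factor from overshooting a block'' is false: $h_1\log q_1$ can be as large as $r$, while $\ell=\sqrt{\log\phi(\La)}$ is only of order $\sqrt{r}\log r$, so a single CRT factor $(\Z/Q_1\Z)^\times$ may be far larger than the target index $2^\ell$. Grouping consecutive $Q_i$ cannot fix this; you must also be willing to \emph{split} a factor, using the subgroups $\{a:a\equiv 1\bmod q_i^{e}\}$ inside $(\Z/q_i^{h_i}\Z)^\times$ to carve off pieces of size roughly $2^\ell$. The paper does exactly this in its worked example of constructing the $G_i$. With that amendment your filtration exists and the rest of your outline goes through.
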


The symbol $\log$ will denote the base $2$ logarithm, while $\ln$ denotes 
the natural logarithm.  Groups are assumed to have efficient implementations of 
arithmetic.

Thanks to Eric Bach and Carl Pomerance for helpful suggestions.
The third and fourth authors are grateful to Mark Liffiton for helpful advice and support 
regarding both hardware and software.  

\section{Previous Results}
Constructing Carmichael numbers has a long history, one that is ably documented 
in \cite{LohNie96} and \cite{Pinch93}.  We restrict ourselves to pointing out the more 
recent results that provide context for our new computations.

The largest tabulation of Carmichael numbers is due to Richard Pinch; his tabulation 
up to $10^{15}$ \cite{Pinch93} has since been extended to $10^{16}$.  Alford, Granville,
and Pomerance proved there are infinitely many Carmichael numbers in \cite{GranPom94}.
The authors credit their inspiration to \cite{Zh92}, who first used 
the Erd\H{o}s heuristic to construct Carmichael numbers with a large number of prime factors.
Current records for Carmichael numbers with many prime factors go 
to L\"{o}h and Niebuhr \cite{LohNie96} at $1101518$ prime factors and an unpublished 
computation by the first two authors at $19565300$ prime factors.

The method of \cite{LohNie96} clearly works well in practice, but unfortunately is 
without a running time analysis.  This makes it difficult to fit into the existing subset product 
literature since it is not clear how the running time depends on $N$ or on $\La$.
The algorithm exploits the fact that among elements of $\mathcal{P}$,
residues of $1$ modulo $q_i^{h_i}$ are more common than other residues.
It divides $b$ by $p \in \mathcal{P}$ in such a way that the running product has 
one more residue equal to one at each step, backtracking if necessary.  Measuring 
the closeness of an element of $\Lunits$ to the identity is done via Definition 
\ref{def:omega}. 

There is a large body of literature on the subset sum problem that transfers 
immediately to the subset product problem.
For subset product problems of high density the standard technique is dynamic programming, 
which in this case would take $O(N \La)$ time and space.  Since our goal is to construct 
Carmichael numbers where $N$ is in excess of $2^{30}$ and $\La$ is even bigger, this 
method is infeasible. 
A better naive algorithm is to pick 
a random subset and see if it products to $b \bmod \La$.  Even if the 
elements of $\mathcal{P}$
were distributed uniformly the expected time taken would be $\Ot( \phi(\La))$.  
 The polynomial time algorithm of \cite{Cha99} is similarly infeasible;
with $N$ so large we need an algorithm that is sub-linear in $N$.

Wagner's algorithm for the $k$-tree problem \cite{Wagner02}
 has inspired an algorithm for the subset sum problem that gets faster as the 
 density increases \cite{Lyu05, Sha08, MinAlis09}.  A recent paper by 
 Howgrave-Graham and Joux \cite{HGJoux10} even gives improvements for 
 most problems of density $1$.  However, all these methods are exponential time 
 and thus inappropriate for our setting.

Theorem \ref{theorem:main1} was inspired by the Kuperberg sieve from the theory 
of quantum algorithms \cite{Kup05}, which has complexity $2^{O(\sqrt{\log{|G|}})}$ where 
elements being matched are in a group $G$.  The same idea of combining 
elements in pairs to zero out square root of the bit size at each step was presented 
by Flaxman and Przydatek \cite{FlaxPrz05}, though they chose to only apply their 
algorithmic idea to a narrow slice of problems with the proper density to make the algorithm 
run in polynomial time.  A better application to the current setting would be 
to pick at least $2^{\sqrt{\log{\La}}}$ elements of $\mathcal{P}$ and pair them 
up over $\sqrt{\log{\La}}$ levels, zeroing out $\sqrt{\log{\La}}$ bits of $\La$ at each 
level.  The algorithm in Theorem \ref{theorem:main2} does even better by applying 
the Kuperberg idea to a subgroup of $\Lunits$, and the surprise is that there are enough 
elements of $\mathcal{P}$ that fall in the subgroup so that the algorithm can still succeed.


\section{Algorithms}\label{sec:algorithms}

Among the two new subset product algorithms presented in this section, Algorithm
\ref{alg:grantham} (developed by the first two authors) 
is more appropriate for constructing Carmichael numbers with 
$k$ prime factors for a variety of $k$, while Algorithm \ref{alg:shallue} (developed by 
the third and fourth authors) is better 
at constructing Carmichael numbers with a very large number of prime factors.
Both build products of primes in $\mathcal{P}$ that get successively close 
to the identity in $\Lunits$, with Algorithm \ref{alg:grantham}
utilizing Definition \ref{def:omegabar} while Algorithm \ref{alg:shallue} uses
Definition \ref{def:omega}.

The motivation behind Algorithm 1 was the observation by the first
author that his implementation of the Erd\H{o}s heuristic only needed to
use a small fraction of the available primes to generate Carmichael
numbers.  Through repeated runs, each time removing the
primes comprising the previous Carmichael number, it produces a set of
co-prime Carmichael numbers where the product of any subset also
forms a Carmichael number. Because of the inevitable difference in
numbers of prime factors, it is likely that the sums of the individual
numbers of prime factors will cover a wide range of possibilities.

In order to maximize the chance of getting most of the intermediate
numbers of prime factors, we want to generate as many different
Carmichael numbers as possible with relatively
few prime factors.  We achieve this goal over $r$ stages (one for each $Q_i$), 
where at stage $j$ we work with a set $S_j$ containing 
products $a$ with $\bar{\omega}(a) =j$ (we call this set $S$ for simplicity).
Let $S_1=\mathcal{P}$, and let $b_j = \prod_{a \in S_j} a \bmod{\La}$.  
For each element in
$S_j$, calculate its residue modulo each of the prime powers dividing
$\Lambda$. Do the same for each $b_j$.  

First, if $b_j \not \equiv 1 \bmod{Q_j}$ find and remove the element 
$e \in S$ that maximizes $\bar{\omega}(e^{-1}b_j)$.
As long as there is an
element congruent to the product modulo $Q_j$ in stage $j$ (which there
will almost certainly be in high density situations), the new product of
all primes in $S$ will be $1$ modulo $Q_i$ for all $i\le j$. 
Then for
each remaining element of $a\in S$, we use a greedy algorithm to find
$\hat{a} \in S$ maximizing $\bar{\omega}(a \cdot \hat{a})$. 
For simplicity Algorithm \ref{alg:grantham} shows $\bar{\omega}(a \cdot \hat{a})$
increasing by only one, but an important improvement is to efficiently find 
$\hat{a}$ that maximizes the increase to the $\bar{\omega}$ value 
(see Section \ref{sec:implementation} for details).
At the end of this process, you will have a (potentially
empty) set of elements that were not matched. At this point, you can
multiply all of these ``chaff'' together to get an element that is $1$
modulo $Q_j$, as well as being $1$ modulo $Q_i$ for $i < j$. 
Replace $S$ with the set-aside products, along with the
product of the chaff, and move on to the next stage.

At the end of the $r$ stages, you have a collection of coprime
``base'' Carmichael numbers. It is not necessary to compute Carmichael
numbers with a particular number of prime factors to prove its
existence. Instead, let $v$ be the vector $[1,0,0,\dots,0]$ of length
equal to one greater than the number of total prime factors in the
Carmichael numbers.  Loop over the base Carmichael numbers. For each
number, let $v'$ be $v$ shifted to the right by the number of factors
in that Carmichael number. E.g., if the first base Carmichael has $3$
factors, $v'=[0,0,0,1,\dots]$. Let $v=v+v'$. Then, at the end of the
loop, the $k$th position in the vector will represent the number of
constructed Carmicheals with $k-1$ prime factors. (Excepting the first
position.)

\begin{algorithm} \label{alg:grantham}
\caption{Many Carmichaels subset-product} 
\SetKwFunction{Sort}{sort}
$S_1 \leftarrow \mathcal{P}$ \;
\For{$u \leftarrow 1$ \KwTo $r$}
{  \Sort $S_u$\;
    calculate $b_u = \prod_{a \in S_u} a \bmod{Q_u}$, remove $a$ from $S_u$ that 
      satisfies $a \equiv b_u \bmod{Q_u}$ \;
   \For{ $a \in S_u$}
    { Find $\hat{a} \in S_u$ such that $a \cdot \hat{a} \equiv 1 \bmod{Q_u}$ \tcc*[r]{pushing down}
       $S_{u+1} \leftarrow a\hat{a}$ \;
        Remove $a,\hat{a}$ from $S$ \;}
      product remaining $a \in S_u$, add to $S_{u+1}$ \;
}
 \Return $S_{r+1}$, each element of which is Carmichael \;
\end{algorithm}

Our next algorithm constructs a Carmichael number via Step \ref{construction2} of the 
Erd\H{o}s construction.  As in Algorithm \ref{alg:grantham} there will be several running 
products; the goal is to get these products closer to the identity in $\Lunits$.
However, this time elements are matched for $Q_i$ with $i$ close to $r$ first, 
and at each level the first element matched is the distinguished product containing 
$b^{-1}$ modulo $\La$.
In this way the final identity product is $b^{-1}$ times a number of primes from a subset 
$\mathcal{T}$ of $\mathcal{P}$, 
making $\mathcal{P} \setminus \mathcal{T}$ the factors of a Carmichael number.

As discussed above, a heuristic application of the ideas from \cite{FlaxPrz05} results 
in a subset product algorithm that takes time and space $\Ot(2^{\sqrt{\log{\La}}})$.
However, this is still inefficient since it does not take advantage of the large number 
of $p \in \mathcal{P}$ with $\omega(p)$ small.  

Let $N_j$ be the size 
of the set $\mathcal{P}_j = \{ p \in \mathcal{P} \ : \ \omega(p) \leq j \}$.   
Then define a subgroup $G$ 
of $\Lunits$ as $(\Z/ \hat{\La} \Z)^{\times}$ with $\hat{\La} = \prod_{i=1}^m Q_i$, 
where 
$$
m = \min_{1 \leq j \leq r} j \mbox{ such that } 
\E[N_j] > (\log{\La}) 4^{\sqrt{ \sum_{i=1}^j h_i \log{q_i}}} \enspace .
$$
We will see in Section \ref{sec:bounds} that with reasonable assumptions 
the expected size of $N_j$ is at least $N \cdot \prod_{i=j+1}^r 1/(h_i+1)$.

For shorthand let $\ell = \sqrt{\log{|G|}}$.  
It is important that during the construction of $\mathcal{P}$ we pick out 
all elements of $\mathcal{P}_m$ and $\Theta (2^{\sqrt{\log{|G|}}} \log{|G|})$
elements of $\mathcal{P}$ with $\omega$ values equal to $j$ for $m < j \leq r$.
The elements with $\omega$ value greater than $m$ will be needed to match 
with $b^{-1}$, so that a product of primes and $b^{-1}$ has $\omega$ value $m$.
This product will be called the distinguished element $a_0$.
The elements of $\mathcal{P}_m$, along with $a_0$, are then matched 
over the course of $\ell$ levels.  At each level, products have another $\ell$ bits 
eliminated, so that by the end of $\ell$ levels an identity product has been formed.

Pseudocode is presented as Algorithm \ref{alg:shallue}.

\begin{algorithm} \label{alg:shallue}
\caption{Large Carmichael subset-product}
\tcc{Phase 1: find/construct elements of $G$}
{\bf Input:} set $S$ containing $\mathcal{P}_m$ and $(\log{|G|}) 2^{\sqrt{\log{|G|}}} $
primes $p$ with $\omega(p) = j$ for each of $m < j \leq r$ \;
$a_0 \leftarrow b^{-1} \bmod{\La}$ \tcc*[r]{$b$ is product of all elements of $\mathcal{P}$}
$\mathcal{T} = \emptyset$ \;
\For{$u \leftarrow r$ \KwTo $m$}{
     find $p \in \mathcal{P}_u$ such that $a_0 \cdot p \equiv 1 \bmod{Q_u}$ \;
     $a_0 \leftarrow a_0 \cdot p \bmod{\La}$, $\mathcal{T} \leftarrow \mathcal{T} \cup \{p\}$ \;
   }  
add $a_0$ to $S$ \;
\tcc{Phase 2: continually pair products to reach identity in $G$}
construct subgroups $G_i$, $1 \leq i \leq \ell$ with factor groups having size $2^{\ell}$ \;
\For{$i \leftarrow 1$ \KwTo $\ell$}{
  pair the element containing $a_0$ first to ensure it is included \;
  pair elements of $S$ whose product is in $G_i$ \;
}
\Return the history of any element in $G_{\ell} = \{1_G\}$ \;
\end{algorithm}

Constructing subgroups of the correct size is not too hard, since $|G|$ will typically be 
products of small primes to large powers.  As an example, note that if 
$G = (\Z/2^{h_1}\Z)^{\times}$ then $G'= \{ a \in G \ : \ a \equiv 1 \bmod{2^e} \}$ is a 
subgroup of $G$ of order $2^{h_1-e}$ and hence index $2^{e-1}$.  Thus for this 
example of $G$ we have $G_0 = G$ and 
$G_i = \{ a \equiv 1 \bmod{2^{i \lfloor \sqrt{|G|} \rfloor}} \}$.

\section{Symmetric Distributions}

Algorithms \ref{alg:grantham} and \ref{alg:shallue} are not guaranteed to succeed.
Rather, they will succeed with some positive probability depending upon the 
distribution of the elements of $\mathcal{P}$.  The key result proven in this 
section is that if the elements of $\mathcal{P}$ are distributed symmetrically, 
then the probability that the product of two elements is in a subgroup is at least 
as large as if the elements were distributed uniformly.

First, however, we mention the tail bound that will be used frequently in the analysis 
that follows.  Since products at any level are composed of distinct elements 
of $\mathcal{P}$, if the initial elements are independent then subsequent 
products are independent as well.  

\begin{theorem}[Chernoff bound] \label{thm:chernoff}
Let $X_1, X_2, \dots, X_n$ be independent Bernoulli trials that take 
value $1$ with probability $p_i$.  Let $X = \sum_{i=1}^n X_i$, $\mu = {\rm E}[X]$, 
and $\delta$ be any real number in the range $(0,1]$.  Then
$$
{\rm Pr}[ X < (1 - \delta) \mu] < {\rm exp}( - \mu \delta^2/2) \enspace .
$$
\end{theorem}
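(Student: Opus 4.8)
The plan is to use the classical Chernoff exponential-moment method, applied to the lower tail. Since for any $t > 0$ the event $X < (1-\delta)\mu$ is equivalent to $e^{-tX} > e^{-t(1-\delta)\mu}$, I would first apply Markov's inequality to the nonnegative random variable $e^{-tX}$ to obtain
$$
{\rm Pr}[X < (1-\delta)\mu] \leq e^{t(1-\delta)\mu} \, {\rm E}[e^{-tX}] \enspace .
$$
The parameter $t$ is left free and will be chosen at the end to make the right-hand side as small as possible.

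Next I would exploit independence to factor the moment generating function as ${\rm E}[e^{-tX}] = \prod_{i=1}^n {\rm E}[e^{-tX_i}]$, and compute each factor directly from the Bernoulli distribution, ${\rm E}[e^{-tX_i}] = 1 + p_i(e^{-t}-1)$. Applying the elementary inequality $1+x \leq e^x$ with $x = p_i(e^{-t}-1)$ and multiplying, the product collapses to $\exp\!\big(\mu(e^{-t}-1)\big)$, using $\mu = \sum_i p_i$. Combining with the Markov step yields
$$
{\rm Pr}[X < (1-\delta)\mu] \leq \exp\!\big( \mu \, [ (e^{-t}-1) + t(1-\delta) ] \big) \enspace .
$$
Minimizing the bracketed expression over $t > 0$ is a one-variable calculus exercise: differentiating shows the optimum occurs at $t = -\ln(1-\delta)$, which is positive precisely because $\delta \in (0,1]$, and substituting $e^{-t} = 1-\delta$ gives the sharp bound $\exp\!\big( \mu[-\delta - (1-\delta)\ln(1-\delta)] \big)$.

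The remaining, and only genuinely delicate, step is to relax this sharp exponent to the clean form stated in the theorem, i.e. to verify $-\delta - (1-\delta)\ln(1-\delta) \leq -\delta^2/2$ for all $\delta \in (0,1]$. I would prove this by setting $g(\delta) = (1-\delta)\ln(1-\delta) + \delta - \delta^2/2$ and showing $g \geq 0$: one has $g(0)=0$ and $g'(\delta) = -\ln(1-\delta) - \delta$, which is nonnegative because $-\ln(1-\delta) \geq \delta$ on $[0,1)$ (immediate from the series expansion of $-\ln(1-\delta)$). Hence $g$ is nondecreasing and stays nonnegative, with the endpoint $\delta = 1$ handled by the limit $(1-\delta)\ln(1-\delta)\to 0$. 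This is the step where the hypothesis $\delta \leq 1$ is actually used and where a wrong direction in the logarithmic estimate would silently break the argument, so I expect it to be the main obstacle; everything preceding it is routine.
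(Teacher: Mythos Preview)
Your argument is correct and is the standard exponential-moment proof of the lower-tail Chernoff bound; the optimization and the final relaxation via $g(\delta) = (1-\delta)\ln(1-\delta) + \delta - \delta^2/2 \geq 0$ are handled cleanly. The paper, however, does not prove this statement at all: it is quoted as a well-known tail bound to be used as a black box in the subsequent analysis, so there is no ``paper's proof'' to compare against. One very minor point: the statement asserts a strict inequality, whereas Markov's inequality and the step $1+x \leq e^x$ naturally give $\leq$; you recover strictness because $1+x < e^x$ whenever $x \neq 0$, i.e. whenever some $p_i > 0$ (and if all $p_i = 0$ the left side is $\Pr[X<0]=0$), so this is easily patched but worth mentioning explicitly.
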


A classical result is that collision probability is minimized 
when the distribution is uniform.  For a proof see \cite[page 66]{Sinkov68}.

\begin{lemma} \label{lem:abstract_collision}
Let $X$ be a random variable on a set $S$.  Then 
$$
\sum_{a \in S} {\rm Pr}[X=a]^2 \geq \sum_{a \in S} \left( \frac{1}{|S|} \right)^2 \enspace .
$$
\end{lemma}

Theorems \ref{theorem:main1} and \ref{theorem:main2} assume that the given distribution 
is symmetric.  Our definition is a simple generalization of that in \cite{FlaxPrz05}.

\begin{definition}
The distribution of a random variable $X$ on $G$ is symmetric if 
${\rm Pr}[ X = a] = {\rm Pr}[ X = a^{-1} ] $ for all $a \in G$.
In this case we call $X$ a symmetric random variable.
\end{definition}

Recall that all groups under consideration are abelian.
If $H$ is a subgroup of $G$ and $X$ is a random variable on $G$
 then there is a natural random variable on $G/H$ given by sampling $X$ and then mapping 
 the result to $G/H$ via the map $g \mapsto gH$.  Call this random variable $X_H$.
 A specific example occurs when $G = \Lunits$, we sample $X$ and want the result 
 modulo $Q_i$.  Call this random variable $X \bmod{Q_i}$.
 Mapping to $G/H$ preserves the symmetric property.

\begin{proposition} \label{prop:sym_subgroup}
Let $H$ be a subgroup of $G$.  If $X$ is symmetric then $X_H$
is symmetric.
\end{proposition}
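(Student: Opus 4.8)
The plan is to compute the probability mass function of $X_H$ directly on each coset and then verify the symmetry condition by hand. The distribution of $X_H$ is governed by ${\rm Pr}[X_H = aH] = \sum_{h \in H} {\rm Pr}[X = ah]$, since the event $X_H = aH$ is precisely the event that $X$ lands somewhere in the coset $aH$, and the cosets partition $G$. Because $G$ is abelian, $G/H$ is again a group in which the inverse of the coset $aH$ is $a^{-1}H$. Hence the assertion that $X_H$ is symmetric reduces to proving that $\sum_{h \in H} {\rm Pr}[X = a^{-1}h] = \sum_{h \in H} {\rm Pr}[X = ah]$ for every $a \in G$.

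First I would rewrite the left-hand sum using the symmetry of $X$: for each $h \in H$, ${\rm Pr}[X = a^{-1}h] = {\rm Pr}[X = (a^{-1}h)^{-1}] = {\rm Pr}[X = ah^{-1}]$, where the final equality uses commutativity of $G$. This replaces each term indexed by the coset $a^{-1}H$ with a term of the form ${\rm Pr}[X = ah^{-1}]$ as $h$ ranges over $H$.

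Next I would reindex the summation. Since $H$ is a subgroup, the inversion map $h \mapsto h^{-1}$ is a bijection of $H$ onto itself, so as $h$ runs over all of $H$ the element $h^{-1}$ does as well. Substituting $h' = h^{-1}$ yields $\sum_{h \in H} {\rm Pr}[X = ah^{-1}] = \sum_{h' \in H} {\rm Pr}[X = ah'] = {\rm Pr}[X_H = aH]$, which is exactly the right-hand side, completing the verification.

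I do not expect any genuine obstacle; the argument is a short bookkeeping computation. The only points needing care are that the quotient inverse is computed by inverting a representative, i.e.\ $(aH)^{-1} = a^{-1}H$, which holds precisely because $G$ is abelian, and that the reindexing $h \mapsto h^{-1}$ is an honest bijection of the summation set. Both facts are immediate from the subgroup axioms and commutativity, so the entire proof occupies only a few lines.
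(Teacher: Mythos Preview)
Your proof is correct and follows essentially the same approach as the paper: both compute $\Pr[X_H = aH]$ as a sum over the coset, invoke the symmetry of $X$ termwise, and reindex via inversion. The only cosmetic difference is that the paper sums over $a \in \hat{a}H$ and uses the equivalence $a \in \hat{a}H \iff a^{-1} \in \hat{a}^{-1}H$, whereas you parametrize the coset as $\{ah : h \in H\}$ and reindex by $h \mapsto h^{-1}$.
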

\begin{proof}
Let $\phi: G \rightarrow G/H$ be the canonical homomorphism.  Then
$$
\Pr[ X_H = \hat{a}H] = \sum_{a \in \hat{a}H} \Pr[X=a]
= \sum_{a^{-1} \in \hat{a}^{-1}H} \Pr[X = a]
= \sum_{a \in \hat{a}^{-1}H} \Pr[X = a^{-1}]
$$
since $\phi$ a homomorphism implies that 
$a \in \hat{a}H$ if and only if $a^{-1} \in \hat{a}^{-1}H$.  The fact that $X$ is symmetric
then yields
$$
\sum_{a \in \hat{a}^{-1}H} \Pr[X = a^{-1}]
= \sum_{a \in \hat{a}^{-1}H} \Pr[X = a] = \Pr[X_H = \hat{a}^{-1}H] \enspace .
$$
\end{proof}

Constructing a new group via direct product also preserves the symmetric property 
for random variables.  If $X_1$, $X_2$ are independent random 
variables on $H_1, H_2$ respectively, then let $X_1 \times X_2$ be a random variable 
on $G = H_1 \times H_2$.  We define this random variable by 
$$
{\rm Pr}[ X_1 \times X_2 = (a,b)] = {\rm Pr}[X_1 = a] \cdot {\rm Pr}[ X_2 = b] \enspace .
$$

\begin{proposition} \label{prop:sym_directprod}
If $X_1$, $X_2$ are symmetric random variables on $H_1, H_2$ respectively then 
$X_1 \times X_2$ is symmetric on $G = H_1 \times H_2$.
\end{proposition}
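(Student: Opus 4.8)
The plan is to verify the defining equation of symmetry directly from the definition of the product distribution. The one structural fact I need is that inversion in a direct product is coordinatewise: for $(a,b) \in G = H_1 \times H_2$ we have $(a,b)^{-1} = (a^{-1}, b^{-1})$, since the group operation is componentwise. This is the only place where the group structure of $G$ (as opposed to just its underlying set) enters.

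With that in hand, the computation is immediate. Starting from the definition of $X_1 \times X_2$, I would write
$$
\Pr[X_1 \times X_2 = (a,b)] = \Pr[X_1 = a] \cdot \Pr[X_2 = b],
$$
then apply the symmetry of $X_1$ on $H_1$ and of $X_2$ on $H_2$ to replace each factor by its inverse version, obtaining $\Pr[X_1 = a^{-1}] \cdot \Pr[X_2 = b^{-1}]$, and finally fold this product back up using the definition once more to recognize it as $\Pr[X_1 \times X_2 = (a^{-1}, b^{-1})]$. By the coordinatewise inversion noted above, this equals $\Pr[X_1 \times X_2 = (a,b)^{-1}]$, which is exactly the symmetry condition for the element $(a,b)$. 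Since $(a,b)$ was arbitrary, symmetry of $X_1 \times X_2$ holds on all of $G$.

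There is no genuine obstacle here: the statement is a one-line consequence of the product formula and componentwise inversion, and the only subtlety worth flagging is that the factorization of the joint probability relies on the independence of $X_1$ and $X_2$ built into the definition of $X_1 \times X_2$. I would also remark that the argument extends verbatim by induction to any finite direct product $H_1 \times \cdots \times H_k$ of groups carrying independent symmetric random variables, which is the form in which it will be applied to $\Lunits \cong \prod_i (\Z/Q_i\Z)^{\times}$ via the Chinese Remainder Theorem.
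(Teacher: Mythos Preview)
Your proposal is correct and is exactly the argument the paper has in mind: the paper's proof consists of the single line ``Follows immediately from the fact that $(a,b)^{-1} = (a^{-1}, b^{-1})$,'' and you have simply spelled out that immediate computation in full.
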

\begin{proof}
Follows immediately from the fact that $(a,b)^{-1} = (a^{-1}, b^{-1})$.

\end{proof}

Products of random variables also preserve the symmetric property.

\begin{proposition} \label{prop:sym_product}
Let $X_1, X_2, \dots, X_n$ be symmetric random variables on $G$.  Then 
$Y = \prod_{i=1}^n X_i$ is also a symmetric random variable on $G$.
\end{proposition}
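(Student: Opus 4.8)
The plan is to deduce the statement from the two structural facts already proved, Propositions~\ref{prop:sym_directprod} and~\ref{prop:sym_subgroup}, by recognizing the product $Y = \prod_{i=1}^n X_i$ as the image of a single symmetric random variable on the direct product $G^n$ under a homomorphism. The operative hypothesis here is independence of the $X_i$ (consistent with both Proposition~\ref{prop:sym_directprod} and the paper's standing assumption that the elements of $\mathcal{P}$ are independent); this is the only place it is needed, and it cannot be dropped, since for dependent variables the distribution of the product is no longer the convolution of the marginals.

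First I would build the joint variable: applying Proposition~\ref{prop:sym_directprod} inductively on $n$, the tuple $X_1 \times X_2 \times \cdots \times X_n$ is a symmetric random variable on $G^n$, where inversion is taken coordinatewise. Next I would note that the multiplication map $\mu \colon G^n \to G$, $\mu(a_1,\dots,a_n) = \prod_{i=1}^n a_i$, is a group homomorphism precisely because $G$ is abelian: the identity $\mu((a_i)(b_i)) = \prod_i a_i b_i = (\prod_i a_i)(\prod_i b_i)$ uses commutativity to separate the two products. Since $Y = \mu(X_1 \times \cdots \times X_n)$ is exactly the pushforward of a symmetric variable along $\mu$, it remains only to see that a homomorphic image of a symmetric variable is symmetric.

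I expect the one point needing care to be that Proposition~\ref{prop:sym_subgroup} is phrased only for the canonical quotient map $G \to G/H$, whereas $\mu$ is a general homomorphism. I would bridge this gap by the first isomorphism theorem, factoring $\mu$ as the projection $G^n \to G^n/\ker\mu$ followed by the isomorphism $G^n/\ker\mu \xrightarrow{\sim} G$: Proposition~\ref{prop:sym_subgroup} supplies symmetry of the image in the quotient, and an isomorphism preserves the defining relation $\Pr[X=a]=\Pr[X=a^{-1}]$ since it commutes with inversion. Equivalently, and perhaps more cleanly, one proves the one-line generalization directly: for any homomorphism $\phi$ and symmetric $X$, $\Pr[\phi(X)=b] = \sum_{\phi(a)=b}\Pr[X=a] = \sum_{\phi(a)=b}\Pr[X=a^{-1}] = \Pr[\phi(X)=b^{-1}]$, where the middle step is symmetry of $X$ and the last uses $\phi(a^{-1})=\phi(a)^{-1}$. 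As a self-contained fallback avoiding the direct-product machinery, the same conclusion follows by induction on $n$ from the convolution formula $\Pr[Y=a]=\sum_{g}\Pr[Y'=g]\,\Pr[X_n=g^{-1}a]$ with $Y'=\prod_{i<n}X_i$, after substituting $g\mapsto g^{-1}$ and applying symmetry of $Y'$ and $X_n$.
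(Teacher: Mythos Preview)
Your proposal is correct, but your primary route differs from the paper's. The paper gives a direct one-line convolution computation: it expands
\[
\Pr[Y=b] \;=\; \sum_{a_1,\dots,a_{n-1}\in G} \Pr[X_1=a_1]\cdots\Pr[X_{n-1}=a_{n-1}]\,\Pr[X_n=b(a_1\cdots a_{n-1})^{-1}],
\]
applies symmetry of each $X_i$ term-by-term, and recognizes the result as $\Pr[Y=b^{-1}]$. Your main argument instead assembles the joint variable on $G^n$ via Proposition~\ref{prop:sym_directprod}, then pushes forward along the multiplication homomorphism, invoking (a mild generalization of) Proposition~\ref{prop:sym_subgroup}. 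This is more structural and makes transparent exactly where abelianness and independence enter; it also has the virtue of reusing earlier propositions rather than redoing their content. The paper's approach is shorter and self-contained, needing no detour through the first isomorphism theorem. Your ``fallback'' inductive convolution argument is essentially the paper's proof, carried out one factor at a time instead of all at once. You are also right to flag that independence is being used; the paper's proposition omits this hypothesis from its statement but relies on it in the first equality of the displayed computation.
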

\begin{proof}
Using the symmetric nature of each of the $X_i$ we have the following 
identity involving multiple sums.
\begin{align*}
& {\rm Pr}[ Y = b]  \\
&= \sum_{a_1, \dots a_{n-1} \in G} {\rm Pr}[X_1 = a_1]  \cdots 
			{\rm Pr}[X_{n-1} = a_{n-1}] {\rm Pr}[ X_n = b \cdot (a_1 a_2 \cdots a_{n-1})^{-1}] \\
&= \sum_{a_1, \dots a_{n-1} \in G} {\rm Pr}[X_1 = a_1^{-1}]  \cdots 
		{\rm Pr}[X_{n-1} = a_{n-1}^{-1}] {\rm Pr}[ X_n = b^{-1} \cdot (a_1 a_2 \cdots a_{n-1})] \\
&= {\rm Pr}[ Y = b^{-1}] \enspace .
\end{align*}
\end{proof}

Phase 2 of Algorithm \ref{alg:shallue} involves matching elements whose product 
is in some subgroup $H$ of $G$.
Weakening the definition of collision to having $X_1 \cdot X_2$ in a subgroup $H$ yields
a similar result to Lemma \ref{lem:abstract_collision}:  
symmetric random variables have a greater than uniform collision probability.

\begin{proposition} \label{prop:subgroup_collision}
Let $H$ be a subgroup of $G$ and let $X_1$, $X_2$ be independent random variables 
on $G$ with identical symmetric distributions.  Then the probability that $X_1 \cdot X_2$ is in 
$H$ is at least $|H|/|G|$.
\end{proposition}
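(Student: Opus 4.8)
The plan is to pass to the quotient group $G/H$, where the condition ``$X_1 \cdot X_2 \in H$'' becomes a genuine collision, and then invoke the classical collision bound (Lemma \ref{lem:abstract_collision}). Writing $X_H$ for the induced random variable on $G/H$ as in the discussion preceding Proposition \ref{prop:sym_subgroup}, I first observe that $X_1 \cdot X_2$ lands in $H$ if and only if $(X_1)_H \cdot (X_2)_H$ equals the identity coset $e$ in $G/H$. By Proposition \ref{prop:sym_subgroup} the images $(X_1)_H$ and $(X_2)_H$ are symmetric, and they inherit independence and identical distribution from $X_1, X_2$. So the target probability equals $\Pr[(X_1)_H (X_2)_H = e]$, a quantity living entirely in the finite abelian group $G/H$ of order $|G|/|H|$.

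Next I would set $p_a = \Pr[(X_1)_H = a] = \Pr[(X_2)_H = a]$ for $a \in G/H$ (equal because the variables are identically distributed) and expand by conditioning on the value of $(X_1)_H$, using independence:
$$
\Pr[(X_1)_H (X_2)_H = e] = \sum_{a \in G/H} \Pr[(X_1)_H = a] \, \Pr[(X_2)_H = a^{-1}] \enspace .
$$
The key step is now to apply the symmetry of $(X_2)_H$, which gives $\Pr[(X_2)_H = a^{-1}] = \Pr[(X_2)_H = a] = p_a$, turning the right-hand side into the collision sum $\sum_{a \in G/H} p_a^2$. Finally, Lemma \ref{lem:abstract_collision} applied with $S = G/H$ yields
$$
\sum_{a \in G/H} p_a^2 \geq |G/H| \left( \frac{1}{|G/H|} \right)^2 = \frac{1}{|G/H|} = \frac{|H|}{|G|} \enspace ,
$$
which is exactly the claimed lower bound.

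The substance of the argument is concentrated in a single conceptual move: recognizing that ``the product falls in $H$'' is the same event as ``$(X_1)_H$ collides with $(X_2)_H^{-1}$,'' and that symmetry lets us replace $(X_2)_H^{-1}$ by a distributionally identical copy of $(X_2)_H$, so the weakened (subgroup) collision on $G$ becomes an ordinary collision on $G/H$. Once this reduction is in place, everything else is bookkeeping, and the only genuine input is the minimality of collision probability under the uniform distribution supplied by Lemma \ref{lem:abstract_collision}. I expect the main obstacle to be purely presentational---being careful that the quotient variables really are independent and identically distributed, and that the symmetry of $(X_2)_H$ (rather than of $X_2$ directly) is what is invoked---rather than any analytic difficulty.
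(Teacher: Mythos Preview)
Your proof is correct and is essentially the same as the paper's: both pass to the quotient $G/H$ (the paper phrases this via a set $C$ of coset representatives and writes $\Pr[X_i \in \hat{a}H]$ where you write $\Pr[(X_i)_H = a]$), use Proposition~\ref{prop:sym_subgroup} to convert $\Pr[(X_2)_H = a^{-1}]$ into $\Pr[(X_2)_H = a]$, and then apply Lemma~\ref{lem:abstract_collision} to the resulting sum of squares.
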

\begin{proof}
Let $C$ be a set of coset representatives of $H$.  By group theory, $C$ has 
size $|G|/|H|$.  $X_i \bmod{H}$ is symmetric by Proposition \ref{prop:sym_subgroup}, 
and thus
$\Pr[ X_1 \in \hat{a}H] = \Pr[X_2 \in \hat{a}^{-1}H] $ for all $\hat{a}$ in $C$.
Then
\begin{align*}
\Pr[ X_1 \cdot X_2 \in H]
&= \sum_{a \in G} \Pr[ X_1 = a] \Pr[X_2 \in a^{-1}H] \\
&= \sum_{\hat{a} \in C} \sum_{a \in \hat{a}H} \Pr[ X_1 = a] \Pr[X_2 \in a^{-1}H] \\
&= \sum_{\hat{a} \in C} \Pr[ X_1 \in \hat{a}H] \Pr[X_2 \in \hat{a}^{-1}H] \\
&= \sum_{\hat{a} \in C} \Pr[ X_1 \in \hat{a}H]^2 \\
&\geq \sum_{\hat{a} \in C} \left( \frac{1}{|G|/|H|} \right) ^2 
= \frac{1}{|G|/|H|}
\end{align*}
where the lower bound follows from Lemma \ref{lem:abstract_collision}.
\end{proof}

\section{Divisors of $\La$} \label{sec:dist}

L\"{o}h and Niebuhr \cite{LohNie96} note the distribution of divisors of $\La$ modulo 
$q_i$, but provide no proof.  Here we give a full description of the distribution modulo
$Q_i = q_i^{h_i}$ in order to provide justification for the claim that elements of $\mathcal{P}$
are distributed symmetrically modulo $\La$.

We begin with the following lemma, then extend the distribution to all classes 
modulo $q_i^{h_i}$.

\begin{lemma} \label{lemma:div_dist}
Suppose that a divisor $d$ of $\La$ is chosen uniformly at random from the set of all 
divisors of $\La$.  Then
$$
\Pr[ d \mbox{ exactly divisible by } q_i^{e}] = \frac{1}{h_i+1}
$$
for all $1 \leq i \leq r$ and all $1 \leq e \leq h_i$.
\end{lemma}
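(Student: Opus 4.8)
The plan is to exploit the unique factorization of divisors of $\La$. Since $\La = \prod_{i=1}^r q_i^{h_i}$ with the $q_i$ distinct primes, every divisor $d$ of $\La$ can be written uniquely as $d = \prod_{i=1}^r q_i^{e_i}$ with $0 \leq e_i \leq h_i$. This sets up a bijection between the divisors of $\La$ and the exponent vectors $(e_1, \dots, e_r)$ ranging over the box $\{0,1,\dots,h_1\} \times \cdots \times \{0,1,\dots,h_r\}$. In particular the total number of divisors is $\prod_{i=1}^r (h_i+1)$, and choosing $d$ uniformly at random is the same as choosing each coordinate $e_i$ uniformly and independently from $\{0,1,\dots,h_i\}$.

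First I would translate the event into this language. The statement ``$d$ is exactly divisible by $q_i^e$'' means $q_i^e \mid d$ but $q_i^{e+1} \nmid d$, which under the bijection is precisely the event $e_i = e$. Next I would count the divisors realizing this event: fixing $e_i = e$ and letting the remaining exponents $e_j$ for $j \neq i$ range freely over their admissible values yields exactly $\prod_{j \neq i}(h_j+1)$ such divisors. Forming the ratio of favorable to total divisors then gives
$$
\Pr[\, d \text{ exactly divisible by } q_i^e\,] = \frac{\prod_{j \neq i}(h_j+1)}{\prod_{j=1}^r (h_j+1)} = \frac{1}{h_i+1},
$$
valid for every $e$ with $0 \leq e \leq h_i$, and in particular for $1 \leq e \leq h_i$ as stated.

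I do not expect a genuine obstacle here; the lemma is a routine counting argument resting on unique factorization. The only point worth stating carefully is that the count of favorable divisors does not depend on the particular admissible value $e$ chosen for the $i$th exponent: each fixed value of $e_i$ admits the same number $\prod_{j \neq i}(h_j+1)$ of completions in the other coordinates. This uniformity in $e$ is exactly what makes the marginal distribution of $e_i$ uniform on $\{0,1,\dots,h_i\}$, and hence drives the $\tfrac{1}{h_i+1}$ conclusion.
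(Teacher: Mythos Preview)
Your proof is correct and follows essentially the same approach as the paper: both identify divisors of $\La$ with exponent tuples $(e_1,\dots,e_r)$ and observe that fixing the $i$th coordinate partitions the divisors into $h_i+1$ classes of equal size. Your version simply makes the count $\prod_{j\neq i}(h_j+1)$ explicit where the paper leaves it implicit.
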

\begin{proof}
The divisors of $\La$ can be identified with $r$-tuples $(e_1, \dots, e_r)$, where 
$e_i$ is power of $q_i$ that divides $d$.  It is thus the case that when the divisors are 
partitioned by the power of $q_i$, there are $h_i+1$ such partitions and they are 
all of the same size.
\end{proof}

If we now assume that the part of a divisor relatively prime to $q_i$ is uniformly distributed 
modulo $Q_i$, then the $1/(h_i+1)$ probability of all divisors exactly divisible by $q_i^e$
 is shared equally among the $q_i^{h_i-e}-q_i^{h_i-e-1}$ elements of $\Z/Q_i\Z$ which 
 are exactly divisible by $q_i^e$.  This yields the heuristic distribution 
 $$
{\rm Pr}[ d \equiv a \bmod{q_i^{h_i}} ] = \left\{
\begin{array}{cl} \frac{1}{h_i+1} & \mbox{if $a = 0$}  \vspace{1 mm} \\
			   \frac{1}{(h_i+1)(q_i^{h_i-e}-q_i^{h_i-e-1})} & \mbox{if $q_i^e$ exactly divides $a$}
\end{array} \right. 
$$ 
It is easy to show that if $d+1$, $d'+1$ are multiplicative 
inverses in $(\Z/Q_i\Z)^{\times}$ and $q_i^e$ exactly divides $d$ 
then $q_i^e$ exactly divides $d'+1$.
Thus the distribution of $X = d+1, d \mid \La$ is symmetric on $(\Z/Q_i\Z)^{\times}$, 
and Proposition \ref{prop:sym_directprod} extends this result to $\Lunits$.

A difficult question is whether the distribution remains symmetric with the added condition 
that $d+1$ be prime.  We will assume it does, but it is worth noting that elements 
of $\mathcal{P}$ do not have the same distribution as divisors of $\La$.  For example, 
if $d \equiv k q_i-1 \bmod{q_i^{h_i}}$ for some $k$ then $d+1$ is divisible by $q_i$ and thus 
not prime (note this particular problem does not jeopardize the claim that 
$\mathcal{P}$ is distributed symmetrically over $\Lunits$).

Analysis of Algorithm
\ref{alg:shallue} will depend upon the following heuristic assumptions, which we 
will henceforth call the standard assumptions.

\begin{heuristic}[Standard assumptions]
Let $X_p$ be a random variable corresponding to the value 
modulo $\La$ of $p \in \mathcal{P}$.
\begin{enumerate}
\item The $X_p$ are symmetric random variables on $\Lunits$.
\item The $X_p$ are independent, as are $X_p \bmod{Q_i}$ for different $1 \leq i \leq r$.
\item The probability that $X_p \equiv 1 \bmod{Q_i}$ is at least $1/(h_i+1)$.
\item $\La$ is constructed so that $1 \leq h_i \leq r/i$ for all $1 \leq i \leq r$.
\item The size of $\mathcal{P}$ is $K(\La)$.
\end{enumerate}
\end{heuristic}

\section{Algorithm analysis}

The following theorem provides the proof for Theorem \ref{theorem:main1}
and is general enough to be applicable to many settings besides
constructing Carmichael numbers.  
We use the notation 
$\ell$ for $\sqrt{\log_2{|G|}}$, and follow closely the proof of 
Theorem 3.2 from \cite{Kup05}.

\begin{theorem} \label{thm:fp_generalized}
Let $G = G_0$ be an abelian group with subgroups $G_1, G_2, \dots, G_{\ell}$ where 
$|G_i|/|G_{i+1}| = 2^{\ell}$ for $0 \leq i \leq \ell-1$.  Suppose that $S$ contains 
at least $O(\ell^2 4^{\ell})$ independent elements of $G$ distributed symmetrically.  
Then Phase 2 
of Algorithm \ref{alg:shallue} finds a solution to the subset product problem with 
probability at least $1 - e^{-\Omega(\log{|G|})}$ using time and space 
$\Ot(4^{\sqrt{\log_2{|G|}}})$.
\end{theorem}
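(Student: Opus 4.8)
<br>

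The plan is to analyze Phase 2 of Algorithm \ref{alg:shallue} level by level, tracking how the number of surviving products shrinks as we climb the subgroup chain $G = G_0 \supset G_1 \supset \cdots \supset G_\ell = \{1_G\}$. At level $i$ we have a collection of products that already lie in $G_{i-1}$, and we pair them so that their products land in $G_i$. Since $|G_{i-1}|/|G_i| = 2^\ell$, Proposition \ref{prop:subgroup_collision} (applied with the ambient group $G_{i-1}$ and subgroup $G_i$) tells us that two independent symmetric elements of $G_{i-1}$ have product in $G_i$ with probability at least $|G_i|/|G_{i-1}| = 2^{-\ell}$. The strategy for pairing is the standard birthday/collision approach: sort the elements by their coset in $G_{i-1}/G_i$ and match each element with a partner in the inverse coset, so that each matched pair produces a new element of $G_i$. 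The key invariant I want to maintain is that the products at each level remain symmetric (so Proposition \ref{prop:subgroup_collision} keeps applying) and that enough of them survive.

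First I would establish the symmetry invariant: every product formed at any level is a product of distinct original elements of $S$, and by Proposition \ref{prop:sym_product} such a product is again symmetric in $G$, hence (by Proposition \ref{prop:sym_subgroup}) symmetric in any quotient we care about. This is what lets the collision bound cascade through all $\ell$ levels. Next I would count survivors. Suppose we enter level $i$ with $n_i$ elements. Grouping them by coset of $G_i$ in $G_{i-1}$, the expected number of matched pairs is governed by the collision probability $2^{-\ell}$; with $n_i$ elements spread over $2^\ell$ cosets, a counting argument (pairing within each coset / inverse-coset class) yields on the order of $n_i \cdot 2^{-\ell}$ matched products moving up to the next level, so I set $n_{i+1} \approx n_i / 2^\ell$ in expectation. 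Starting from $n_0 = O(\ell^2 4^\ell)$, after $\ell$ levels we expect $n_\ell \approx \ell^2 4^\ell \cdot 2^{-\ell \cdot \ell} \cdot (\text{correction})$ survivors; the point of the $\ell^2 4^\ell = \ell^2 2^{2\ell}$ starting count is exactly to keep the running count from collapsing to zero before we reach $G_\ell$, leaving a positive expected number of identity products at the end.

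The main obstacle, and where I would spend the most care, is turning the expected-survivor heuristic into a high-probability guarantee of the stated form $1 - e^{-\Omega(\log|G|)}$. The difficulty is that survivor counts at successive levels are not independent, and the pairing introduces dependence among the coset-occupancy statistics. The plan is to control this with the Chernoff bound of Theorem \ref{thm:chernoff} applied at each level: conditioned on $n_i$ elements entering level $i$ being (approximately) symmetric and independent, I would lower-bound the number that successfully pair into $G_i$, showing it concentrates near its mean $n_i 2^{-\ell}$, with failure probability exponentially small in the mean. To make this rigorous one must argue that products of distinct source elements are genuinely independent at each level — this is precisely the remark in the excerpt that products composed of distinct elements of $\mathcal{P}$ inherit independence from the independence of the original elements. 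Taking a union bound over the $\ell = \sqrt{\log|G|}$ levels, each failing with probability $e^{-\Omega(\log|G|)}$, gives the overall success probability, since $\ell$ is only polylogarithmic. Finally, the running time and space are dominated by storing and sorting the $\Ot(4^\ell) = \Ot(4^{\sqrt{\log_2|G|}})$ elements at the first level and processing the (geometrically shrinking) collections at subsequent levels, with sorting contributing only logarithmic factors absorbed into the $\Ot$ notation.
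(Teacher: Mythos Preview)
Your survivor-count recursion is wrong, and this is the heart of the argument. You write that ``a counting argument \dots\ yields on the order of $n_i \cdot 2^{-\ell}$ matched products,'' so $n_{i+1} \approx n_i/2^{\ell}$. But then from $n_0 = \Theta(\ell^2 4^{\ell}) = \Theta(\ell^2 2^{2\ell})$ you get $n_{\ell} \approx \ell^2 2^{2\ell - \ell^2}$, which is exponentially small in $\ell$; no ``(correction)'' factor repairs this. The quantity $2^{-\ell}$ is the probability that a \emph{single fixed pair} lands in $G_i$, not the fraction of elements that get matched. The correct picture is that at level $u$ there are $n_u \geq c\,\ell^2 2^{2\ell - u}$ elements spread over only $2^{\ell}$ cosets of $G_{u+1}$ in $G_u$; hence a given element has in expectation at least $n_u 2^{-\ell} \geq c\,\ell^2 2^{\ell-u} \geq c\,\ell^2$ potential partners, so \emph{almost every} element gets paired and $n_{u+1} \approx n_u/2$. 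The recursion is halving, not division by $2^{\ell}$, and after $\ell$ levels you still have $\Theta(\ell^2 2^{\ell}) \gg 1$ identity products.

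The paper makes this precise by proving inductively that $|L_u| \geq C_u\,\ell^2 2^{2\ell-u}$ for constants $1 \leq C_u \leq 3$: for each $a \in L_u$ the number of partners $X = \sum_b X_b$ has $\E[X] \geq \ell^2$ as long as at least $\ell^2 2^{\ell}$ elements remain, so Chernoff gives $\Pr[X < \ell^2/2] \leq e^{-\ell^2/8}$, and a union bound over at most $\ell^2 4^{\ell}$ matchings and $\ell$ levels yields the $1 - e^{-\Omega(\log|G|)}$ success probability. Your Chernoff step, concentrating ``the number that successfully pair'' around the wrong mean $n_i 2^{-\ell}$, would not give the needed bound even if the mean were right. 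The symmetry/independence invariants you describe are correct and match the paper; it is only the counting that needs to be fixed.
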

\begin{proof}
We begin by assuming the algorithm has been successful up to level $u$, so 
that we have a list $L_u$ of elements in the group $G_u$.  Our goal is to prove 
by induction that 
$$
|L_u| \geq C_u \cdot \ell^2 2^{2\ell - u}
$$
with high probability, given that $|L_0| = C_0 \ell^2 2^{2\ell}$.  
Here $C_u$ is defined recursively by $C_0 = 3, 
C_{u+1} = C_{u} - 2^{-(\ell-u)}$.  For $0 \leq u \leq \ell$ we have $1 \leq C_u \leq 3$.
As long as $|L_{\ell}| \geq 1$, then Phase 2 succeeds in finding a solution.

Given $a \in L_u$, let $X_b$ be a Bernoulli random variable that takes value $1$ if 
$a$ and $b$ ``match," that is if $a \cdot b \in G_{u+1}$.  Then $a$ has a match in $L_u$
as long as $X = \sum_b X_b \geq 1$.  Elements of $G_u$ are  
products of symmetrically distributed elements of $S$, and thus are symmetrically 
distributed themselves by Proposition \ref{prop:sym_product}.  It then follows from 
Proposition \ref{prop:subgroup_collision} that $X_b = 1$ with probability at least 
$|G_u|/|G_{u+1}| = 2^{-\ell}$.  Thus $\E[X] \geq \ell^2$ until all but $\ell^2 2^{\ell}$ elements are 
matched.  If this holds then 
$$
|L_{u+1}| \geq \frac{ |L_u| -  \ell^2 2^{\ell} }{2}
\geq \ell^2 2^{2\ell - u - 1} \cdot \left( C_u -  2^{-(\ell-u)} \right) \enspace .
$$
The products are distinct and hence independent, so the Chernoff bound applies.
The probability that $a$ fails to match is at most
$$
\Pr[ X \leq \ell^2/2] \leq \Pr[ X \leq (1-1/2) \E[X]]
\leq {\rm exp}( - \E[X]/8) \leq {\rm exp}( - \ell^2/8) \enspace .
$$
We seek to make at most $\ell^2 4^{\ell}$ matches, so all will succeed with probability at least
$$
( 1 - e^{-\ell^2/8})^{\ell^2 4^{\ell}} \geq 1 - \ell^2 4^{\ell} e^{-\ell^2/8} \enspace .
$$
The probability of all matches succeeding at all $\ell$ levels is then at least
$$
(1 - e^{-\frac{\ell^2}{16} + 3 \ell})^{\ell}
\geq 1 - \ell e^{-\frac{\ell^2}{16} + 3\ell}
\geq 1 - e^{- \Omega(\ell^2)}
= 1 - e^{-\Omega(\log{|G|})} \enspace .
$$
\end{proof} 

\section{Bounding the Running Time} \label{sec:bounds}

The previous section gave a subexponential analysis of Phase 2 running on a general 
group $G$.  Here we bound $\log{|G|}$ in terms of $N$ so that the running time of 
Algorithm \ref{alg:shallue} can be expressed in terms of the problem size, proving 
Theorem \ref{theorem:main2}.  In fact, an easy bound on $\log{\La}$ would be 
sufficient asymptotically, but to measure the gain from having $G$ be a subgroup of 
$\Lunits$ we go further and prove that $m = O(\sqrt{r} \log{r})$.
Throughout we will take the standard assumptions 
as given.

Our starting 
point is item (5) of the standard assumptions, namely that 
$$
|\mathcal{P}| = 
\frac{\La}{\phi(\La) \ln{\sqrt{2\La}} } \prod_{i=1}^r \left( h_i + \frac{q_i-2}{q_i-1} \right) 
$$
and which we denote by $N$.
Recall that $G$ is defined as the integers modulo $\hat{\La} = \prod_{i=1}^m q_i^{h_i}$
where $m$ is the smallest integer such that $N_m$ is large enough.  Here $N_m$ 
is the number of elements of $\mathcal{P}$ with $\omega(p) \leq m$ and ``large enough"
means $\E[N_m]$ is large enough for Phase 2 to succeed with high probability.  By our standard assumptions the probability 
that an element of $\mathcal{P}$ is congruent to $1$ modulo $q_i^{h_i}$ is at least
$\frac{1}{h_i+1}$ and this condition is independent for different values of $i$.  
Hence the condition on $\E[N_m]$ becomes
$$
\frac{\La}{\phi(\La) \ln{\sqrt{2\La}} } \prod_{i=1}^m \left(h_i + \frac{q_i-2}{q_i-1} \right)
\prod_{i = m+1}^r \frac{h_i + \frac{q_i-2}{q_i-1} }{h_i+1}
>
(\log{\La}) 4^{\sqrt{\sum_{i=1}^m h_i \log{q_i}  }}  \enspace .
$$
Clearly $1 \leq m \leq r$ and a smaller $m$ is preferred so we will 
provide an upper bound.  The analysis requires $m > 10$ and $r \geq 64$.

Bounding $\log{|G|}$ requires several preparatory results.  First we prove that 
$\log{\La}$ is polynomial in $r$.  

\begin{lemma}\label{lemma:La_bound}
Assume that $10 < m \leq r $ and that $1 \leq h_i \leq  r/i $ for $1 \leq i \leq r$.  Then 
$$
m < \sum_{i=1}^m h_i \log{q_i} < 2 r (\log{m})^2  \enspace .
$$
\end{lemma}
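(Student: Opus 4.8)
The plan is to treat the two inequalities separately, since the lower bound is essentially immediate while the upper bound carries all of the content. For the lower bound I would use only $h_i \geq 1$ together with $q_1 = 2$, $q_2 = 3$. Then
$$
\sum_{i=1}^m h_i \log q_i \;\geq\; \sum_{i=1}^m \log q_i \;=\; \log \prod_{i=1}^m q_i \;\geq\; \log\bigl(3\cdot 2^{m-1}\bigr) \;>\; m ,
$$
where I am writing $\log$ for the base-$2$ logarithm as in the paper, and the strict inequality comes from the single factor $q_2 = 3 > 2$ (legitimate since $m > 10 \geq 2$).

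For the upper bound I would first invoke the hypothesis $h_i \leq r/i$ to pull out the factor $r$, so that it suffices to show
$$
\sum_{i=1}^m \frac{\log q_i}{i} \;<\; 2 (\log m)^2 .
$$
The key elementary estimate is $q_i \leq i^2$ for every $i \geq 2$, which I would obtain from the explicit bound $q_i < i(\ln i + \ln\ln i)$ (valid for $i \geq 6$, where $\ln i + \ln\ln i < i$) together with direct checking of the cases $2 \leq i \leq 5$. This yields $\log q_i \leq 2\log i$ for $i \geq 2$, so after isolating the $i=1$ term (which contributes $\log q_1 = 1$) the sum is at most $1 + 2\sum_{i=2}^m (\log i)/i$.

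I would then bound the remaining sum by integral comparison. The integrand $x \mapsto (\log x)/x$ is decreasing for $x \geq 3$, so $\sum_{i=4}^m (\log i)/i \leq \int_3^m (\log x)/x\,dx = \tfrac{\ln 2}{2}\bigl((\log m)^2 - (\log 3)^2\bigr)$, and the first two terms $(\log 2)/2$ and $(\log 3)/3$ contribute only an absolute constant. Collecting everything, the whole quantity is at most $\ln 2\,(\log m)^2$ plus an absolute constant, and the dominant coefficient $\ln 2 \approx 0.69$ sits comfortably below $2$. The role of the hypothesis $m > 10$ is precisely to absorb the leftover constant: since $(\log m)^2 > (\log 10)^2 > 11$, the gap between $\ln 2$ and $2$ swamps the constant term and the bound $2(\log m)^2$ follows with ample room.

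The main obstacle, such as it is, lies only in securing a clean self-contained bound on $q_i$ without invoking the full prime number theorem; the generous constant $2$ on the right is exactly what makes the crude estimate $q_i \leq i^2$ more than sufficient. The one point demanding genuine care is the integral comparison together with the finitely many small indices where $(\log x)/x$ is not yet monotone, but neither presents a real difficulty.
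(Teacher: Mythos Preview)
Your proof is correct. The paper takes a slightly leaner path for the upper bound: instead of bounding each $\log q_i$ by $2\log i$ and then estimating $\sum_{i}(\log i)/i$ via an integral, it uses the uniform bound $\log q_i \leq \log q_m < 2\log m$ and then needs only the harmonic estimate $\sum_{i=1}^m 1/i < 1 + \ln m$, finishing with the inequality $1 + \ln m < \log m$ for $m > 10$. Your decomposition yields a better leading constant (roughly $\ln 2$ in place of $2$) and hence a great deal of slack at the final step, at the cost of the integral comparison and the small-$i$ case checks; the paper's argument is shorter but uses the hypothesis $m > 10$ essentially at the edge. The lower-bound arguments are the same.
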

\begin{proof}
The $q_i$ are 
the first $r$ primes and $1 \leq h_i$ for all $i$.  Thus 
$\sum_{i=1}^m h_i \log{q_i} > m$.

For the upper bound we require a bound on the $m$th prime number.  From 
\cite[Section 8.8]{Bach96} this is given by $q_m < m(\ln{m} + \ln{\ln{m}})$ for $m \geq 6$.
With $m > 6$ we use the conceptually easier bound of $\log{q_m} < 2\log{m}$.
  Since $h_i \leq  r/i $  this yields
$$
\sum_{i=1}^m h_i \log{q_i} < 2r\log{m} \sum_{i=1}^m\frac{1}{i}
< 2r\log{m} \cdot (1+\ln{m}) 
$$
and $(1 + \ln{m}) < \log{m}$ for $m > 10$.
\end{proof}


An immediate corollary is that $\log{\La} < 2r (\log{r})^2$.
Next we find that $r$ is logarithmic in $N$.

\begin{lemma}\label{lemma:N_bound}
Assume that $r \geq 64$.  Then $\log{N} > r/3$.
\end{lemma}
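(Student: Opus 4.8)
The plan is to take the base-$2$ logarithm of the closed form for $N$ given in item (5) of the standard assumptions and show that the product $\prod_{i=1}^r \bigl(h_i + \frac{q_i-2}{q_i-1}\bigr)$ alone forces $\log N$ to grow linearly in $r$, while the $\ln\sqrt{2\La}$ sitting in the denominator costs only a logarithmic amount. Writing
$$
\log N = \log\frac{\La}{\phi(\La)} - \log\bigl(\ln\sqrt{2\La}\bigr) + \sum_{i=1}^r \log\Bigl(h_i + \frac{q_i-2}{q_i-1}\Bigr),
$$
I would discard the first term using $\La/\phi(\La) \geq 1$ and bound the sum from below one factor at a time.

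For the sum, the key observation is that each factor is at least $\tfrac32$ as soon as $i \geq 2$: the $i=1$ term equals $h_1 \geq 1$ and contributes $0$, but for $i \geq 2$ we have $q_i \geq 3$, so $\frac{q_i-2}{q_i-1} \geq \tfrac12$ and hence $h_i + \frac{q_i-2}{q_i-1} \geq \tfrac32$. This gives $\sum_{i=1}^r \log(\cdots) \geq (r-1)\log\tfrac32$, and since $\log\tfrac32 = \log 3 - 1 > 0.58 > \tfrac13$, this single estimate is exactly what makes a linear lower bound on $\log N$ possible.

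It then remains to check that $\log(\ln\sqrt{2\La})$ is negligible next to the linear term. Here I would invoke the corollary to Lemma~\ref{lemma:La_bound}, namely $\log\La < 2r(\log r)^2$, which (being careful with the base-$2$ to natural-log conversion) gives $\ln\sqrt{2\La} = \tfrac12(\ln 2 + \ln\La) < r(\log r)^2$ for $r \geq 2$, and therefore $\log(\ln\sqrt{2\La}) < \log r + 2\log\log r$. Combining the pieces yields
$$
\log N \geq (r-1)\log\tfrac32 - \log r - 2\log\log r,
$$
so that what remains is to verify this right-hand side exceeds $r/3$ for $r \geq 64$. The only genuine obstacle is the base-$2$/natural-log bookkeeping in the $\ln\sqrt{2\La}$ step; the rest is an elementary boundary check. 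Since $(\log\tfrac32 - \tfrac13)r$ dominates the logarithmic correction, I would confirm the inequality directly at $r = 64$ (where $\log r = 6$ and a few units of slack remain) and observe that the difference between the two sides is increasing in $r$, so the bound persists for all $r \geq 64$.
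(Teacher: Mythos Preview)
Your proof is correct and follows essentially the same route as the paper: drop the $\La/\phi(\La)$ factor, bound each $h_i + \tfrac{q_i-2}{q_i-1}$ by $\tfrac32$ for $i\geq 2$, control $\ln\sqrt{2\La}$ via the corollary $\log\La < 2r(\log r)^2$ of Lemma~\ref{lemma:La_bound}, and then check the resulting inequality $(r-1)\log\tfrac32 - O(\log r) > r/3$ at $r=64$. The only difference is cosmetic: the paper uses the cruder bound $\ln\sqrt{2\La} < 2r(\log r)^2$ directly, picking up an extra $-1$ in the final inequality, whereas you track the $\ln 2$ factor and save that unit.
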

\begin{proof}
We start with the definition of $N$ given above.  The term $h_i + \frac{q_i-2}{q_i-1}$
is bounded below by $3/2$ for all $q_i \geq 3$.  Since $\La/\phi(\La) > 1$ and 
$\ln{\sqrt{2\La}} < \log{\La} < 2r(\log{r})^2$ by Lemma \ref{lemma:La_bound} we have 
$$
N = \frac{\La}{\phi(\La) \ln{\sqrt{2\La}} } \prod_{i=1}^r \left( h_i + \frac{q_i-2}{q_i-1} \right) 
> \frac{1}{2r (\log{r})^2  } \left( \frac{3}{2} \right)^{r-1}
$$
Thus $\log{N} > (r-1)\log{(3/2)} - (1 + \log{r} + 2\log{\log{r}}  ) > r/3$ when $r \geq 64$.
\end{proof}

When bounding $\E[N_m]$ a sticky term is $\prod (h_i+1) / (h_i + \frac{q_i-2}{q_i-1})$.
It ought to be close 
to one;  we provide a bound logarithmic in $r$.

\begin{lemma}\label{lemma:correctionterm_bound}
Assume $r \geq 16$.  Then for all $m \geq 1$
$$
\prod_{j=m+1}^r \frac{h_j+1}{h_j+\frac{q_j-2}{q_j-1}} < (\ln{r})^2 \enspace .
$$
\end{lemma}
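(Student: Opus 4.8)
The plan is to reduce to the extremal case $h_j = 1$, replace the product by a partial sum of reciprocals of primes, and then close with an explicit Mertens-type estimate.

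First I would exploit monotonicity in $h_j$. Since $\frac{q_j-2}{q_j-1} < 1$, the map $h \mapsto \frac{h+1}{h+\frac{q_j-2}{q_j-1}}$ is decreasing for $h \ge 1$, so each factor is largest when $h_j = 1$. Moreover every factor exceeds $1$, so the product is largest when as many factors as possible are present, i.e. when $m = 1$. Combining these two observations,
\[
\prod_{j=m+1}^r \frac{h_j+1}{h_j+\frac{q_j-2}{q_j-1}}
\le \prod_{j=2}^r \frac{2}{1+\frac{q_j-2}{q_j-1}}
= \prod_{j=2}^r \frac{2(q_j-1)}{2q_j-3}
= \prod_{j=2}^r\left(1+\frac{1}{2q_j-3}\right).
\]
Next I would take logarithms and linearize: using $\ln(1+x)\le x$ together with $2q_j-3 \ge q_j$ for $q_j \ge 3$,
\[
\ln\!\left(\prod_{j=2}^r\Bigl(1+\tfrac{1}{2q_j-3}\Bigr)\right)
\le \sum_{j=2}^r \frac{1}{2q_j-3}
\le \sum_{j=2}^r \frac{1}{q_j}.
\]
It therefore suffices to prove $\sum_{j=2}^r 1/q_j < 2\ln\ln r$, after which exponentiating gives the claim. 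Note that restricting to $m \ge 1$ is exactly what excludes the problematic term $q_1 = 2$ (where the factor would be $\frac{h_1+1}{h_1}$), so the sum legitimately starts at $j=2$.

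The main work, and the main obstacle, is this last inequality: a partial sum of reciprocals of primes has natural size only $\ln\ln q_r \approx \ln\ln r$, and it is the factor-of-two slack in the target $2\ln\ln r$ that makes everything go through. I would invoke the explicit form of Mertens' theorem, $\sum_{p\le x} 1/p \le \ln\ln x + B + 1/\ln^2 x$ with $B \approx 0.2615$ (available in the same source \cite{Bach96} already used for prime estimates), applied at $x = q_r$, together with the prime-size bound $q_r < r(\ln r + \ln\ln r)$ (the estimate from \cite[Section 8.8]{Bach96} used in the proof of Lemma~\ref{lemma:La_bound}). Since $\ln\ln q_r \le \ln\ln r + \frac{\ln 2 + \ln\ln r}{\ln r}$ and $\sum_{j=2}^r 1/q_j = \sum_{p \le q_r} 1/p - \tfrac12$, the residual constant $B + \frac{\ln 2 + \ln\ln r}{\ln r} - \tfrac12$ must be shown to stay below $\ln\ln r$ for all $r \ge 16$ (where $\ln\ln r \ge 1.02$), using that the correction $\frac{\ln 2 + \ln\ln r}{\ln r}$ decreases in $r$. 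The delicate point is purely numerical: pinning down these constants so the inequality holds already at the boundary $r = 16$ rather than only asymptotically. Should the closed-form estimate prove too lossy at the very smallest admissible values, I would simply verify $\sum_{j=2}^r 1/q_j < 2\ln\ln r$ directly for the finitely many $r$ below the threshold at which the analytic bound takes over.
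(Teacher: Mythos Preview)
Your proposal is correct and follows essentially the same route as the paper: bound each factor by $1+1/q_j$, use $1+x\le e^x$, and finish with the explicit Mertens estimate $\sum_{p\le q_r}1/p<2\ln\ln r$ for $r\ge 16$. The only cosmetic difference is that the paper rewrites the general factor directly as $1+\frac{1}{(q_j-1)h_j+q_j-2}\le 1+\frac{1}{q_j}$ without first passing to the extremal case $h_j=1$, and it includes the $j=1$ term in the prime-reciprocal sum (harmless, since it only weakens the bound).
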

\begin{proof}
Start with the transformation
$$
\frac{h_j+1}{h_j+\frac{q_j-2}{q_j-1}} = 
\frac{h_j + \frac{q_j-2}{q_j-1} + 1 - \frac{q_j-2}{q_j-1} }{ h_j+\frac{q_j-2}{q_j-1}}
= 1 + \frac{1/(q_j-1)}{h_j+\frac{q_j-2}{q_j-1}} 
= 1 + \frac{1}{(q_j-1)h_j + q_j-2} \enspace .
$$
By assumption $q_j \geq 3$ and $h_j \geq 1$, so 
$(q_j-1)h_j \geq 2$ for all $2 \leq j \leq r$, giving the bound
$$
1 + \frac{1}{(q_j-1)h_j + q_j-2} \leq 1 + \frac{1}{q_j} \enspace .
$$
We saw in Lemma \ref{lemma:La_bound}
that $r \geq 6$ implies $q_r < 2r\ln{r}$.  We now use a result 
from \cite[Section 8.8]{Bach96} on the prime reciprocal sum, namely
$$
\sum_{j=1}^r \frac{1}{q_j} \leq \sum_{q < 2r\ln{r}} \frac{1}{q}
< \ln{\ln{ (2r\ln{r})}} + B + \frac{1}{(\ln{(2r\ln{r})})^2} \enspace .
$$
where the sums are over primes 
and $B < 0.27$ is the prime-reciprocal constant.  This upper bound 
is less than $2\ln{\ln{r}}$ as long as $r \geq 16$.
The slope of the tangent line to $y=e^x$ at $x=0$ is $1$, which means $e^x \geq 1+x$
for all positive $x$.  With $x = 1/q_j$ this gives
$$
\prod_{j=m+1}^r \left( 1 + \frac{1}{q_j} \right)
< {\rm exp} \left( \sum_{j=1}^r \frac{1}{q_j} \right)
< {\rm exp} (2\ln{\ln{r}})
= (\ln{r})^2 \enspace .
$$
\end{proof}

With this preparatory work out of the way the bound on $\log{|G|}$ follows from 
properly bounding ${\rm E}[N_m]$, the expected number of elements of $\mathcal{P}$
with $\omega$-value $m$.

\begin{lemma}\label{lemma:G_bound}
Assume $r$ and $N$ are sufficiently large.  Given the standard assumptions, 
$$
\log{|G|} < 27 \log{N} (\log{\log{N}})^2 \enspace .
$$
\end{lemma}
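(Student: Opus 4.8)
The plan is to bound $\log|G|$ from above by $\log\hat\La = \sum_{i=1}^m h_i\log q_i$ (since $|G| = \phi(\hat\La) < \hat\La$), and then to control both the number of terms $m$ and the size of that sum. By Lemma \ref{lemma:La_bound} the sum is at most $2r(\log m)^2$, so once $m$ is under control the whole quantity is governed by $r$, and Lemma \ref{lemma:N_bound} converts $r$ into $\log N$ via $r < 3\log N$. The crux, and the quantity promised at the start of this section, is an upper bound $m = O(\sqrt r\log r)$; with that in hand $\log m = O(\log r) = O(\log\log N)$ and the final estimate drops out after tracking constants.

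To bound $m$ I would use its defining minimality: $m$ is the least index for which $\E[N_m] > (\log\La)\,4^{\sqrt{\sum_{i=1}^m h_i\log q_i}}$, so it suffices to exhibit a value $m_0 = O(\sqrt r\log r)$ at which this inequality already holds, forcing $m\le m_0$. For the left side I would lower-bound $\E[N_{m_0}]$ using the factored form written at the start of this section: discard $\La/\phi(\La) > 1$, replace $\ln\sqrt{2\La}$ by the corollary bound $\log\La < 2r(\log r)^2$, use $\prod_{i=1}^{m_0}(h_i+\frac{q_i-2}{q_i-1}) \ge (3/2)^{m_0-1}$ (each factor exceeds $3/2$ for $q_i\ge 3$, the $i=1$ factor being $\ge 1$), and handle the tail product $\prod_{i=m_0+1}^r \frac{h_i+\frac{q_i-2}{q_i-1}}{h_i+1}$ with Lemma \ref{lemma:correctionterm_bound}, which gives it a lower bound of $1/(\ln r)^2$. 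This yields a lower bound of shape $(3/2)^{m_0-1}/\big((\log\La)(\ln r)^2\big)$. For the right side I would again invoke Lemma \ref{lemma:La_bound} to write $\sqrt{\sum_{i\le m_0}h_i\log q_i} < \sqrt{2r}\,\log m_0$, so the target to beat is $2r(\log r)^2\cdot 2^{2\sqrt{2r}\log m_0}$.

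Comparing the two, the left side grows like $(3/2)^{m_0}$, exponentially in $m_0$, while the right side grows only like $2^{2\sqrt{2r}\log m_0} = m_0^{2\sqrt{2r}}$, polynomially in $m_0$ of degree $\Theta(\sqrt r)$. Taking logarithms, the inequality reduces to $(m_0-1)\log(3/2) > 2\sqrt{2r}\,\log m_0 + O(\log r)$, where the non-$m_0$ terms are $O(\log r)$ because $\log\log\La$, $\log\ln r$, and $\log(2r(\log r)^2)$ are all of that order. Choosing $m_0$ a suitable constant (around $3$) times $\sqrt r\log r$ makes $\log m_0 = \tfrac12\log r\,(1+o(1))$ and lets the exponential term dominate, giving exactly the claimed $m = O(\sqrt r\log r)$. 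I expect this step to be the main obstacle, since it is where all three preparatory lemmas must be combined, the exponential term must be made to overtake the subexponential one, and the logarithmic correction-term and $\log\La$ factors must be carried cleanly through the constants.

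Finally I would assemble the pieces. With $m = O(\sqrt r\log r)$ we have $\log m < \log r$ for large $r$, so Lemma \ref{lemma:La_bound} gives $\log|G| < 2r(\log m)^2 < 2r(\log r)^2$; substituting $r < 3\log N$ from Lemma \ref{lemma:N_bound} together with $\log r < \log(3\log N) = \log 3 + \log\log N$ turns this into $6\log N(\log 3 + \log\log N)^2$. For $N$ sufficiently large the bracketed factor is below $\sqrt{27/6}\,\log\log N$, so the product falls under $27\log N(\log\log N)^2$, as claimed. The generous constant $27$ leaves ample slack, and the sharper $m$-bound shows $\log|G|$ beats the crude $\log\La$ estimate by roughly a constant factor, quantifying the gain from passing to the subgroup $G$.
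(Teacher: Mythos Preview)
Your proposal is correct and follows essentially the same route as the paper: bound $m = O(\sqrt{r}\log r)$ by balancing the exponential growth $(3/2)^m$ against the subexponential $m^{O(\sqrt{r})}$ using all three preparatory lemmas, then feed this into $\log|G| < 2r(\log m)^2$ and convert to $\log N$ via $r < 3\log N$. The only cosmetic difference is that the paper argues from minimality (the defining inequality fails at $m-1$, giving an upper bound on $m$ directly, with the extra factor $1/(h_m+1)$ absorbed into the $O(\log r)$ term), whereas you exhibit an explicit $m_0$ at which the inequality succeeds; the resulting comparison $(m-1)\log(3/2) > 2\sqrt{2r}\,\log m + O(\log r)$ and the final constant-tracking are the same.
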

\begin{proof}
Recall that $G$ is defined as the group of units modulo $\prod_{i=1}^m q_i^{h_i}$ 
where $m$ is the smallest integer such that 
$N \prod_{i=m+1}^r \frac{1}{h_i+1} > (\log{\La}) 4^{\sqrt{\log{|G|}}}$.  Since $m$ is the smallest 
such integer, multiplying by $1/(h_m+1)$ flips the inequality.  Thus
$$
\frac{1}{h_m+1} 
\frac{\La}{\phi(\La) \ln{\sqrt{2\La}} } \prod_{i=1}^m \left(h_i + \frac{q_i-2}{q_i-1} \right)
\prod_{i = m+1}^r \frac{h_i + \frac{q_i-2}{q_i-1} }{h_i+1}
<
(\log{\La}) 4^{\sqrt{\log{|G|}  }} \enspace .
$$
Focusing on the left hand side, $h_m < r$ by construction, 
$\La/\phi(\La) > 1$, $\ln{\sqrt{2\La}} < \log{\La} <2r (\log{r})^2$ by Lemma \ref{lemma:La_bound}, 
and $\prod (h_i + \frac{q_i-2}{q_i-1})(h_i+1) > (\ln{r})^{-2}$ 
by Lemma \ref{lemma:correctionterm_bound}.  Combining all these bounds yields
\begin{align*}
&  \frac{1}{r+1} \frac{1}{2r(\log{r})^2} \frac{1}{(\ln{r})^2}
\prod_{i=1}^m \left(h_i + \frac{q_i-2}{q_i-1} \right)
<
(\log{\La}) 4^{\sqrt{\log{|G|} }} \\
\implies &
\sum_{i=1}^m \log{ \left( h_i + \frac{q_i-2}{q_i-1} \right)} - 
				(2\log{2r} + 4\log \log{r})
< \log \log{\La} + 2 \sqrt{ \sum_{i=1}^m h_i \log{q_i} } \\
\implies &
m \log{(3/2)} - 3\log{r} < \log\log{\La} + 2\sqrt{2 r (\log{m})^2}
\end{align*}
where  $\sum_{i=1}^m h_i \log{q_i} < 2r (\log{m})^2$ follows from 
Lemma \ref{lemma:La_bound}.

If $m > 6 \sqrt{r} \log{r}$ then $m\log{(3/2)} > 2\sqrt{2r} \log{m} + \log\log{\La} + 3\log{r}$ 
for sufficiently large $r$ so we must 
have $m < 6 \sqrt{r} \log{r}$.

Since $\log{|G|} < 2r (\log{m})^2$ this bound on $m$ gives us 
$\log{|G|} < 2r (\log{6} + \frac{1}{2} \log{r} + \log\log{r})^2$
which is at most $2r (\frac{3}{2} \log{r})^2$ if $r \geq 32$.
Lemma \ref{lemma:N_bound} now completes the proof.

\end{proof}

Although asymptotically $\log{|G|}$ and $\log{\La}$ are equivalent at 
$O(\log{N} (\log\log{N})^2)$, this work showing $m = O(\sqrt{r}\log{r})$
provides a theoretical justification for the gains seen in practice.  
We now prove 
Theorem \ref{theorem:main2}, restated here for convenience.  It was proven 
that $m < 6\sqrt{r}\log{r}$ in Lemma \ref{lemma:G_bound} 
so the necessary assumption that  $m > 2 \sqrt{r}\log{r}$ causes no harm.

\begin{theorem} 
Let $G = \Lunits$ and $\mathcal{P}$ be defined as in the Erd\H{o}s construction, with the 
added assumption that $1 \leq h_i \leq r/i$ for all $1 \leq i \leq r$.
Assume that the elements of $\mathcal{P}$ are independent and 
distributed symmetrically in $G$, and 
that $N = | \mathcal{P} | = K(\La)$.  Finally, assume that the probability $p \equiv 1 
\bmod{Q_i}$ for $p \in \mathcal{P}$ is at least $1/(h_i+1)$ and independent across $Q_i$.

Then there is an algorithm that with probability at least $1 - e^{-\Omega(\log{\La})}$
finds a subset of $\mathcal{P}$ that products to $b$ in $G$ and requires time and space
$$
2^{ O( \sqrt{  (\log{N}) (\log\log{N})^2 } ) } \enspace .
$$
\end{theorem}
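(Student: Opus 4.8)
The plan is to prove the theorem by instantiating Algorithm \ref{alg:shallue} and assembling the pieces already in hand, since the substantive analysis lives in Theorem \ref{thm:fp_generalized} and in the running-time bounds of Section \ref{sec:bounds}. Adopting the algorithm's notation, let $G = (\Z/\hat{\La}\Z)^{\times}$ with $\hat{\La} = \prod_{i=1}^m Q_i$, let $m$ be the minimal index for which $\E[N_m]$ exceeds the stated threshold, and write $\ell = \sqrt{\log|G|}$ (so $\log|G| = \ell^2$). The goal, in the language of Step \ref{construction2} of the Erd\H{o}s construction, is to output a subset $\mathcal{T} \subseteq \mathcal{P}$ with $\prod_{p \in \mathcal{T}} p \equiv b \pmod{\La}$. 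First I would exhibit the chain $G = G_0 \supset G_1 \supset \cdots \supset G_{\ell} = \{1_G\}$ with $|G_i|/|G_{i+1}| = 2^{\ell}$, which exists because $|G| = \phi(\hat{\La})$ is a product of small prime powers, following the explicit recipe given after Algorithm \ref{alg:shallue}.

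The heart of the argument is Phase 2, which reduces directly to Theorem \ref{thm:fp_generalized}. That theorem needs $\Omega(\ell^2 4^{\ell})$ independent, symmetrically distributed elements of $G$. By Proposition \ref{prop:sym_subgroup} the reductions modulo $\hat{\La}$ of the symmetric elements of $\mathcal{P}_m$ are symmetric in $G$, and they are independent by hypothesis; the remaining task is to certify there are enough of them. The defining property of $m$ gives $\E[N_m] > (\log{\La})\,4^{\sqrt{\sum_{i \le m} h_i \log{q_i}}} = (\log{\La})\,4^{\ell}$, and $N_m$ is a sum of independent indicators, so the Chernoff bound (Theorem \ref{thm:chernoff}) with $\delta = 1/2$ yields $N_m \geq \tfrac12 \E[N_m] > \tfrac12 (\log{\La})\,4^{\ell}$ except with probability $e^{-\Omega((\log{\La})4^{\ell})}$. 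Since $\log{\La} \geq \log|G| = \ell^2$, this comfortably exceeds $\Omega(\ell^2 4^{\ell})$, so Theorem \ref{thm:fp_generalized} applies to these elements together with the distinguished $a_0$ and produces an identity product in $G$ with probability $1 - e^{-\Omega(\log|G|)}$ and time and space $\Ot(4^{\sqrt{\log|G|}})$.

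Next I would verify Phase 1 and the combination. Running $u$ from $r$ down to $m+1$, each step seeks a supplied prime $p$ with $\omega(p) = u$ and $p \equiv a_0^{-1} \pmod{Q_u}$, after which $a_0 \equiv 1 \pmod{Q_u}$; when the loop finishes, $a_0 \equiv 1 \pmod{Q_i}$ for all $i > m$, i.e. $a_0 \in G$. The feasibility rests on showing $\phi(Q_u) \ll (\log|G|)\,2^{\ell}$ for every $u > m$, which follows from $\log \phi(Q_u) \le h_u \log{q_u}$ together with the structural hypothesis $h_i \le r/i$ and the bound $m = O(\sqrt{r}\log{r})$ from Lemma \ref{lemma:G_bound}: these force $h_u \log{q_u} = O(\ell/\sqrt{\log r})$ for $u > m$, leaving ample slack. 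Granting this, the combination is clean: every prime consumed in Phase 2 lies in $\mathcal{P}_m$ and is $\equiv 1 \pmod{Q_i}$ for $i > m$, so multiplying such primes onto $a_0$ preserves $a_0 \equiv 1 \pmod{Q_i}$ for $i>m$, while Phase 2 drives the product to $1$ modulo $\hat{\La} = \prod_{i \le m} Q_i$. Hence the product of $b^{-1}$ with all primes selected in both phases is $\equiv 1 \pmod{\La}$, so those primes constitute the desired $\mathcal{T}$.

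Finally I would translate resources into the problem size $N$. By Lemma \ref{lemma:G_bound}, $\log|G| < 27 \log{N}\,(\log\log{N})^2$, so
$$
4^{\sqrt{\log|G|}} = 2^{2\sqrt{\log|G|}} < 2^{2\sqrt{27 \log{N}\,(\log\log{N})^2}} = 2^{O(\sqrt{(\log{N})(\log\log{N})^2})},
$$
and the $O((r-m)\,2^{\ell}\log|G|)$ group operations of Phase 1 stay within the same bound, giving the claimed time and space. A union bound over the two phases leaves failure probability $e^{-\Omega(\log|G|)}$, which matches the asserted $1 - e^{-\Omega(\log{\La})}$ up to the constant in $\Omega$, since $\log|G|$ and $\log{\La}$ are of the same asymptotic order $O(\log{N}(\log\log{N})^2)$. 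I expect the main obstacle to be the Phase 1 matching analysis: the single-element match at each level requires hitting a specified \emph{nonidentity} residue modulo $Q_u$, and the bare symmetric-distribution hypothesis only lower-bounds the mass at the identity, so making this step rigorous leans on the finer near-uniform heuristic of Section \ref{sec:dist} rather than on symmetry alone; the delicate inequality $h_u \log q_u = O(\ell/\sqrt{\log r})$ for $u > m$, and its dependence on $h_i \le r/i$, is where the structural hypothesis earns its keep.
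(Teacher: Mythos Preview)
Your overall architecture is the same as the paper's: reduce to Theorem \ref{thm:fp_generalized} for Phase 2, handle the distinguished element $a_0$ separately in Phase 1, then combine and invoke Lemma \ref{lemma:G_bound} for the resource bound. Two steps in your sketch do not go through as written, and the paper proceeds differently at exactly those points.

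First, your Phase 1 inequality. You assert $h_u\log q_u = O(\ell/\sqrt{\log r})$ for $u>m$ and attribute this to ``$h_i\le r/i$ and the bound $m=O(\sqrt{r}\log r)$ from Lemma \ref{lemma:G_bound}.'' But an \emph{upper} bound on $m$ pushes $\ell=\sqrt{\log|G|}$ \emph{down}, which is the wrong direction: you need $\ell$ large relative to $h_u\log q_u$. What is actually required is a \emph{lower} bound on $\log|G|$, and the paper supplies one via the non-increasing hypothesis on the $h_i$. Concretely, the paper proves only the weaker (and sufficient) inequality $q_{m+1}^{h_{m+1}}<4^{\ell}$ by showing
\[
(h_{m+1}\log q_{m+1})^2 \;<\; 4\sum_{i=1}^m h_i\log q_i\,,
\]
using $\sum_{i\le m} h_i\log q_i \ge m\,h_{m+1}$ (from $h_i\ge h_{m+1}$) together with the explicit \emph{lower} bound assumption $m>2\sqrt{r}\log r$ (declared harmless since $m<6\sqrt{r}\log r$). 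The paper then argues level $u=m+1$ is the worst case and applies Chernoff over all of $\mathcal{P}$ to get $\E[Y]\ge \log\La$ hits. Your stronger claim may be salvageable with the same ingredients, but the citation you give does not establish it.

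Second, your probability combination. You write that $e^{-\Omega(\log|G|)}$ matches $e^{-\Omega(\log\La)}$ ``since $\log|G|$ and $\log\La$ are of the same asymptotic order.'' They are not: both are $O(r(\log r)^2)$, but $\log|G|$ can be as small as $\Theta(m)=\Theta(\sqrt{r}\log r)$, so $\log|G|/\log\La$ is not bounded below. The paper avoids this by obtaining $e^{-\Omega(\log\La)}$ directly in Phase 1, and by feeding Phase 2 a pool of size $(\log\La)\,4^{\ell}$ rather than the minimal $\ell^2 4^{\ell}$, so that the Chernoff exponent in the proof of Theorem \ref{thm:fp_generalized} becomes $\Omega(\log\La)$ rather than $\Omega(\ell^2)=\Omega(\log|G|)$.

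Your closing caveat is on target: the Phase 1 step of hitting a prescribed nonidentity residue modulo $Q_u$ does use the finer near-uniform heuristic of Section \ref{sec:dist} (the paper invokes $\Pr[Y_p=1]\ge 1/(q_u^{h_u}-q_u^{h_u-1})\cdot\prod_{i>u}1/(h_i+1)$), not symmetry alone.
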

\begin{proof}
First assume a solution is found and that it includes $a_0$, the distinguished element of $G$.
Then $b^{-1} \bmod{\La}$ times some product of primes is the identity in $G$, and since each 
$a_i$ is the identity in $\Lunits / G$, we have discovered a set of primes in $\mathcal{P}$ that 
product to $b$ modulo $\La$.

In bounding the probability of failure we focus first on the size of $\mathcal{P}_m$
and the distinguished element $a_0$.  We notated $|\mathcal{P}_m|$ by $N_m$, 
and chose $G$ so that 
$$
\E[N_m] > (\log{\La}) 4^{\sqrt{\log{|G|}}} \enspace .
$$
For $p \in \mathcal{P}$ let $X_p$ be a Bernouilli random variable that takes value 
$1$ if $\omega(p)=m$, and let $X$ be the sum of all $X_p$.  Then $\E[X] = \E[N_m]$
and by the Chernoff bound
\begin{align*}
& {\rm Pr}[ X < (1-1/2) \E[N_m]] < {\rm exp}(- \E[N_m]/8) \\
\implies & {\rm Pr}\left[X < \frac{1}{2} (\log{\La}) 4^{\sqrt{\log{|G|}}} \right] 
	< {\rm exp}\left( -\frac{\log{\La}}{8}  \right)
\enspace .
\end{align*}
The distinguished element is dealt with in a different fashion.  For levels $u=r$ to $m+1$
we multiply $b$ by an element $p$ such that $\omega(p)=u$ and $p \equiv b \bmod{q_u^{h_u}}$.
Focus on level $u$ and let $Y_p$ be a Bernoulli random variable that is $1$ if it satisfies that 
condition, with $Y$ being the sum over all $Y_p$.  The worst case is at level $m+1$, where 
$$ 
{\rm Pr}[ Y_p = 1] \geq \frac{1}{q_{m+1}^{h_{m+1}} - q_{m+1}^{h_{m+1}-1} }
\prod_{i=m+2}^r \frac{1}{h_i+1}
\geq \frac{1}{q_{m+1}^{h_{m+1}}} \prod_{i=m+2}^r \frac{1}{h_i+1}
$$
and hence
$$
\E[Y] \geq \frac{N}{q_{m+1}^{h_{m+1}}} \prod_{i=m+2}^r \frac{1}{h_i+1}  
\geq \frac{1}{q_{m+1}^{h_m+1}} \cdot \E[N_m] 
\geq \frac{4^{\sqrt{\log{|G|}}}}{q_{m+1}^{h_m+1}}  \cdot \log{\La} \enspace .
$$
We will show $(h_{m+1} \log{q_{m+1}})^2 < 4 \sum_{i=1}^m h_i\log{q_i}$
and hence that $q_{m+1}^{h_{m+1}} < 4^{\sqrt{\log{|G|}}}$.
Since the $h_i$ are non-increasing, $4 \sum_{i=1}^m h_i \log{q_i} \geq 4 m h_{m+1}$.
Meanwhile, $$h_{m+1} (\log{q_{m+1}})^2 \leq \frac{r}{m+1} \cdot ( 2\log{(m+1)})^2 \leq 4m$$
since $m > 2 \sqrt{r} \log{r}$ implies $m^2 + m \geq 4 r (\log{r})^2$.

Now the Chernoff bound can again be applied, giving
$$
{\rm Pr}[Y < (1 - 1/2)(\log{\La})] \leq {\rm exp}\left( - \log{\La}/8 \right) \enspace .
$$
This is the worst case among the at most $r$ levels, which means the total success 
probability of Phase 1 is at least 
$$
(1 - e^{-\Omega(\log{\La})})(1 - e^{-\Omega(\log{\La})})^r
\geq 1 - (r+1)e^{-\Omega(\log{\La})}
> 1 - (3\log{N}+1)e^{-\Omega(\log{\La})}
$$
where $r < 3\log{N}$ follows by Lemma \ref{lemma:N_bound}.

With $S$ containing $O((\log{\La}) 4^{\sqrt{\log{|G|}}})$ elements, Theorem
\ref{thm:fp_generalized} allows us to conclude that Phase 2 completes with 
probability at least $1 - e^{-\Omega(\log{\La})}$.
Note that the distinguished element is 
distributed symmetrically by Proposition \ref{prop:sym_product} and that Theorem 
\ref{thm:fp_generalized} assumes it is always the first to be matched.
The total probability of success is thus also $1 - e^{-\Omega(\log{\La})}$.

Finally we note the running time.  We start the algorithm with at most \newline
$O(r (\log{\La}) 4^{\sqrt{\log{|G|}}})$ elements in $S$.  
Finding the right product to put $b^{-1}$ in $G$  takes 
$r$ searches at a cost of $O(\sqrt{\log{|G|}})$ each.  Phase 2  takes time 
and space $\Ot( (\log{\La}) 4^{\sqrt{\log{|G|}}})$.  Applying Lemma \ref{lemma:G_bound}
then gives a total resource usage of $2^{ O( \sqrt{  (\log{N}) (\log\log{N})^2 } ) }$.
\end{proof}

\section{Construction} \label{sec:implementation}

Algorithm \ref{alg:grantham} was implemented in Python and run on a T3E.  
The best result so far 
is that Carmichael numbers have been constructed with $k$ prime factors 
for every $k$ between $3$ and $19565220$.

It is important to detail how one can efficiently ``push down" several 
primes in one step.  Let $a$ be an element of the set $S$ at stage $j$.  We 
seek $\hat{a}$ that maximizes $\bar{\omega}(a \cdot \hat{a})$.
For a set $A$ let $\bar{A} = \{a^{-1} \bmod{\La} : \ a \in A\}$.
Then combine the sets $S'=\{a \ : \ \mbox{$a\in S$ and $a<a^{-1} \bmod{\La}$}\}$
and $\overline{S \setminus S'}$ and sort lexicographically on the list of residues.
Then for our element $a$, the element of $\overline{S \setminus S'}$ closest 
to $a$ in the $\bar{\omega}$ metric will be the element immediately following 
$a$ or preceding $a$.  We use $S'$ rather than combining $S$ and $\bar{S}$
because in the latter case we end up with duplicates.
Once you
find the associated $\hat{a}$, if the maximum possible value of $\bar{\omega}(a \cdot \hat{a})$
at the $j$th stage is $j$, proceed to the next element. If it is
greater than $j$, then remove both $a$ and $b$ from $S$, multiply and
set aside. 

Algorithm  \ref{alg:shallue} was implemented using C++
and NTL \cite{NTL} and run on a 3.3 GHz  processor
with 16 GB of main memory.
Two Carmichael numbers are presented in Table \ref{table:construction}.
Here, $k$ is the number of prime factors of $n$
and $d$ denotes the number of decimal digits of $n$, 
while $\mathcal{T}$ is the set output by Algorithm \ref{alg:shallue}
and removed.  As conjectured in 
\cite{LohNie96}, $K(\La)$ is within 3\%  of
$|\mathcal{P}|$. 
The last thirty digits of each Carmichael number are also included.
The billion prime case took more time because the method of 
calculating $G$ resulted in Phase 2 operating on 27.7 million primes, 
as opposed to 16.5 million for the ten billion factor case.  This neatly demonstrates
how the number of primes needed for Phase 2 grows more slowly than $N$.

{\small
\begin{table}
\caption{Large Carmichael numbers} \label{table:construction} 
{\small
\begin{tabular}{l}
\hline
\noalign{\smallskip}
$\Lambda = 2^{15} \! \cdot \! 3^8 \! \cdot \! 5^5 \! \cdot \! 7^4 \!
\cdot \! 11^3 \! \cdot \! 13^2 \! \cdot \! 17^2 \! \cdot \! 19^2 \!
\cdot \! 23^2 \! \cdot \! 29 \! \cdot \! 31 \! \cdot \! 37  \! \cdot
\! 41 \! \cdot \! 43 \! \cdot \! 47 \! \cdot \! 53 \! \cdot \! 59 \!
\cdot \! 61 \! \cdot \! 67 \! \cdot \! 71 \! \cdot \!
73 \! \cdot \! 79$ \\
$=288828494392627542423975683172283292832395366400000$ \\
$k = 1021449117$, $|\mathcal{P}|=1021449926, K (\Lambda)$ = 1009441849  \\
$n = \ldots 547538202025813003668377600001$, 
$|\mathcal{T}| = 809$, $d= 25564327388$ \\
Subset Product Time = 174 sec \\
\hline
\noalign{\smallskip}
$\Lambda = 2^{16} \! \cdot \! 3^7 \! \cdot \! 5^5 \! \cdot \! 7^4 \!
\cdot \! 11^3 \! \cdot \! 13^2 \! \cdot \! 17^2 \! \cdot \! 19^2 \!
\cdot \! 23^2 \! \cdot \! 29^2 \! \cdot \! 31 \! \cdot \! 37  \! \cdot
\! 41 \! \cdot \! 43 \! \cdot \! 47 \! \cdot \! 53 \! \cdot \! 59 \!
\cdot \! 61 \! \cdot \! 67 \! \cdot \! 71 \! \cdot \!
73 \! \cdot \! 79 \! \cdot \! 83 \! \cdot \! 89 \! \cdot \! 97$ \\
$=4001166357176246301338040166195304168348080373267865600000$ \\
$k =10333229505, |\mathcal{P}| =10333230324, K (\Lambda) = 10225023621$\\
$n = \ldots  445706495205032238479360000001$, 
$|\mathcal{T}| = 819$,
$d = 295486761787$ \\
Subset Product Time = 98 sec \\
\hline
\end{tabular}
}
\end{table}
}

For Algorithm \ref{alg:shallue} the most important implementation
detail was the
instantiation of elements of $\mathcal{P}$.  Since such primes have the property
that $p-1$ divides $\Lambda$, one can store only the exponents of the divisors
of $\Lambda$, thereby fitting primes into a single 64-bit $\verb=long=$ for all
cases under consideration.  


However, one cannot multiply elements of $\mathcal{P}$ and maintain the property
that one less than the element divides $\Lambda$.  We created a class
$\verb=ModElement=$
which encapsulates an integer modulo $\Lambda$, a vector of condensed elements
of $\mathcal{P}$ that product to the integer (called the ``history"), 
and methods that product
such elements
or compute its $\omega$ value.  In this way, when some element is the
identity, the
history of that element gives the solution to the subset product problem.

We implemented the subgroups $G_i$ as integers modulo $M_i$ where $M_i$ is an
appropriate divisor of $\Lambda$.  Ease of implementation made this an
attractive
alternative to a generic group model, but one disadvantage is that
$M_i$ is sometimes
larger than the order of the subgroup it represents.  For example, if
$5^2$ divides $\Lambda$
and $5$ divides $M_i$, then $M_{i+1}$ will be divisible by $5^2$ in order that products
at the next level be congruent to $1 \bmod{25}$, and hence be five
times too big.

A significant improvement to Algorithm \ref{alg:shallue} comes from
passing elements
congruent to $1$ modulo $M_i$ to the next level without matching.
Since elements
congruent to $1$ modulo $q_i^e$ are more common, observed sizes of $L_i$ are
larger than $L_{i-1}/2$.

As for the underlying data structure, the necessary requirements are
that one have
constant time insertion and removals, and that one can quickly find
elements with
a particular value modulo $M_i$.  Our solution was a hash table of $\verb=ModElements=$
keyed to 
the residues modulo $M_i$ of the value.

Even though this data structure makes each prime expensive to store, the total
memory needed is quite manageable due to the relatively small number of primes
needed out of the total.  For example, in the ten billion case we use
only about
16.5 million primes, since $G = (\Z/ \hat{\La} \Z)^{\times}$ where
$\hat{\La} = 2^{16} \! \cdot \! 3^7 \! \cdot \! 5^5 \! \cdot \! 7^4 \!
\cdot \! 11^3 \! \cdot \! 13^2 \! \cdot \! 17^2 \! \cdot \! 19^2 \!
\cdot \! 23^2 \! \cdot \! 29^2 \! \cdot \! 31 \! \cdot \! 37  \! \cdot
\! 41 \! \cdot \! 43 \! \cdot \! 47 \! \cdot \! 53 \! \cdot \! 59$.

\section{Future Work}

In \cite{GuiMor92} the authors extend the basic Erd\H{o}s construction to a variety 
of other pseudoprimes.  Most likely the methods in this paper can be extended as 
well.

\bibliographystyle{amsplain}

\providecommand{\bysame}{\leavevmode\hbox to3em{\hrulefill}\thinspace}
\providecommand{\MR}{\relax\ifhmode\unskip\space\fi MR }
\providecommand{\MRhref}[2]{%
  \href{http://www.ams.org/mathscinet-getitem?mr=#1}{#2}
}
\providecommand{\href}[2]{#2}

\end{document}